\title{Bimonotone enumeration}
\author{Michael Eisermann}
\address{Institut Fourier, Universit\'e Grenoble I, France}
\email{Michael.Eisermann@ujf-grenoble.fr}
\urladdr{http://www-fourier.ujf-grenoble.fr/$\sim$eiserm}
\date{\today}
\newtheorem{theorem}{Theorem}
\newtheorem{lemma}[theorem]{Lemma}
\newtheorem{proposition}[theorem]{Proposition}
\newtheorem{corollary}[theorem]{Corollary}
\theoremstyle{definition}
\newtheorem{definition}[theorem]{Definition}
\newtheorem{example}[theorem]{Example}
\newtheorem{remark}[theorem]{Remark}
\numberwithin{equation}{section}
\newcommand{\sref}[1]{\textsection\ref{#1}}
\newcommand{\algoinput}{\textbf{Input:}}
\newcommand{\algorequire}{\textbf{Requires:}}
\newcommand{\algooutput}{\textbf{Output:}}
\newcommand{\algoensure}{\textbf{Ensures:}}
\newenvironment{specification}{%
  \newcommand{\INPUT}{\item[\mbox{\algoinput}\hfill]}%
  \newcommand{\REQUIRE}{\item[\mbox{\algorequire}\hfill]}%
  \newcommand{\OUTPUT}{\item[\mbox{\algooutput}\hfill]}%
  \newcommand{\ENSURE}{\item[\mbox{\algoensure}\hfill]}%
  \begin{list}{}{ \topsep0pt \partopsep0pt \parskip0pt %
      \itemindent0pt \listparindent0pt \labelwidth 4.5em \labelsep 0.5em %
      \leftmargin\labelwidth \addtolength{\leftmargin}{\labelsep} %
      } \itemsep0pt \parsep0pt }{\end{list}}
\newcommand{\algorithmicreturn}{\textbf{return}}
\newcommand{\RETURN}{\STATE\algorithmicreturn\ }
\newcommand{\IFTHEN}[2]%
{\STATE\algorithmicif\ #1\ \algorithmicthen\ #2\ \algorithmicendif}
\newcommand{\IFTHENELSE}[3]%
{\STATE\algorithmicif\ #1\ \algorithmicthen\ #2\ \algorithmicelse\ #3 \algorithmicendif}
\newcommand{\WHILEDO}[2]%
{\STATE\algorithmicwhile\ #1\ \algorithmicdo\ #2\ \algorithmicendwhile}
\newcommand{\DOWHILE}[2]%
{\STATE\algorithmicdo\ #1 \algorithmicwhile\ #2 \algorithmicendwhile}
\newcommand{\REPEATUNTIL}[2]%
{\STATE\algorithmicrepeat\ #1\ \algorithmicuntil\ #2}
\newcommand{\N}{\mathbb{N}}                     
\newcommand{\Z}{\mathbb{Z}}                     
\newcommand{\Q}{\mathbb{Q}}                     
\newcommand{\R}{\mathbb{R}}                     
\newcommand{\minus}{\smallsetminus}             
\newcommand{\im}{\operatorname{Im}}             
\newcommand{\pr}{\operatorname{pr}}             
\newcommand{\tle}{\leqslant}                    
\newcommand{\tge}{\geqslant}                    
\newcommand{\sle}{\leq}                         
\newcommand{\ble}{\mathbin{\leq\!\!\!\leq}}     
\newcommand{\amin}{a_\mathrm{min}}              
\newcommand{\amax}{a_\mathrm{max}}              
\newcommand{\bmin}{b_\mathrm{min}}              
\newcommand{\bmax}{b_\mathrm{max}}              
\newcommand{\pred}[1]{\pi{#1}}                  
\renewcommand{\succ}[1]{\sigma{#1}}             
\newcommand{\supper}{{\smash{+}}}               
\newcommand{\bupper}{{\smash{\#}}}              
\newcommand{\Min}{\operatorname{Min}}           
\newcommand{\List}[1]{\left\langle\; #1 \;\right\rangle}
\newcommand{\Queue}[1]{\left\langle\; #1 \;\right\rangle}
\newcommand{\taxicab}[1]{\operatorname{taxicab}(#1)}
\begin{document} 

\begin{abstract}
  Solutions of a diophantine equation $f(a,b) = g(c,d)$, 
  with $a,b,c,d$ in some finite range, can be efficiently 
  enumerated by sorting the values of $f$ and $g$ 
  in ascending order and searching for collisions.
  This article considers functions $f \colon \N \times \N \to \Z$ 
  that are bimonotone in the sense that $f(a,b) \le f(a',b')$ 
  whenever $a \le a'$ and $b \le b'$.  A two-variable polynomial 
  with non-negative coefficients is a typical example. 
  The problem is to efficiently enumerate all pairs $(a,b)$ 
  such that the values $f(a,b)$ appear in increasing order.
  We present an algorithm that is memory-efficient and highly parallelizable.
  In order to enumerate the first $n$ values of $f$, the algorithm 
  only builds up a priority queue of length at most $\sqrt{2n}+1$.
  In terms of bit-complexity this ensures that the algorithm 
  takes time $O( n \log^2 n )$ and requires memory $O( \sqrt{n} \log n )$, 
  which considerably improves on the memory bound $\Theta( n \log n )$ provided by 
  a na\"ive approach, and extends the semimonotone enumeration 
  algorithm previously considered by R.L.\,Ekl and D.J.\,Bernstein.
\end{abstract}

\subjclass[2000]{68P10; 11Y50, 68W10, 11Y16, 11D45} 



 



 
\keywords{sorting and searching, diophantine equation, bimonotone function, 
sorted enumeration, semimonotone enumeration, bimonotone enumeration}




\vspace*{20mm}

\maketitle


\section{Introduction and statement of results} \label{sec:Introduction}

\subsection{Motivation} \label{sub:Motivation}

Given polynomial functions $f,g \colon \N\times\N \to \Z$, 
how can we efficiently enumerate solutions of the equation 
$f(a,b) = g(c,d)$? One standard way to do this is to sort 
the sets $F = \{\, (f(a,b),a,b) \mid 1\le a,b\le N \,\}$
and $G = \{\, (g(c,d),c,d) \mid 1\le c,d\le N \,\}$ into ascending order 
with respect to the first coordinate and to look for collisions.
As stated, this requires to store all elements before sorting,
which consumes memory $\Theta(n \log n)$,
where $n = N^2$ is the number of values to enumerate,
and time between $\Omega(n \log n)$ and $O(n \log^2 n)$.

The present article develops a less memory consuming algorithm 
under the hypothesis that $f$ and $g$ are \emph{bimonotone}, 
that is, monotone in each variable.
This is sufficiently often the case to be interesting, for example 
when $f$ and $g$ are given by polynomials with non-negative coefficients.
Given a bimonotone function $f$, Algorithm \ref{algo:BimonotoneEnumeration}, 
discussed below, produces a stream $x_1,x_2,x_3,\dots$ enumerating 
all parameters $x_i = (a_i,b_i)$ in the domain of $f$ 
such that $f(x_1) \tle f(x_2) \tle f(x_3) \tle \dots$. 
Having at hand such sorted enumerations for $f$ and $g$,
one can easily enumerate solutions of the equation $f(x)=g(y)$:
start with $i=1$ and $j=1$;
whenever $f(x_i)<g(y_j)$, increment $i$;
whenever $f(x_i)>g(y_j)$, increment $j$.
If eventually $f(x_i)=g(y_j)$, then output 
the solution $(x_i,y_j)$ and continue searching.

\subsection{Main result} \label{sub:MainResult}

The idea of sorted enumeration has been applied by D.J.\,Bernstein \cite{Bernstein:2001}
with great success to equations of the special form $p(a) + q(b) = r(c) + s(d)$.
We generalize his approach to arbitrary bimonotone functions.
The main result can be stated as follows:

\begin{theorem}
  Suppose that $f \colon \N \times \N \to \Z$ is bimonotone 
  and proper in the sense that for every $z \in \Z$ 
  only finitely many pairs $(a,b)$ satisfy $f(a,b)\le z$.
  Then Algorithm \ref{algo:BimonotoneEnumeration} stated below 
  produces a stream enumerating all pairs $(a,b) \in \N \times \N$ 
  such that the values $f(a,b)$ appear in increasing order.
  While enumerating the first $n$ values, the algorithm 
  only builds up a priority queue of length $m \le \sqrt{2n}+1$.
  If $f$ is a polynomial, this ensures that the algorithm 
  takes time $O( n \log^2 n )$ and requires memory $O( \sqrt{n} \log n )$.
\end{theorem}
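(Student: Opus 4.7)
The plan is to decompose the theorem into four claims -- correctness of the output order, completeness, the queue-length bound, and the polynomial-case bit-complexity -- and handle them in turn. The organising observation is that after any number of extractions the set $S$ of already-emitted pairs is an order ideal (downward-closed subset) of $\N\times\N$, and the invariant I would establish by induction on the step count is that the priority queue stores precisely the antichain of minimal elements of the complement $(\N\times\N)\minus S$.

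Given this invariant, correctness of the ordering is immediate from bimonotonicity: the minimum of $f$ over the upper set $(\N\times\N)\minus S$ must be attained at a minimal element of that set, hence at an entry of the queue, so extracting queue elements in order of $f$-value produces a non-decreasing stream. Completeness follows from properness: if some pair $(a,b)$ were never emitted, then at every step the queue would contain a frontier element $\tle(a,b)$, whose $f$-value is $\tle f(a,b)$; but only finitely many pairs have $f$-value $\tle f(a,b)$, so after finitely many extractions $(a,b)$ must itself rise to the top.

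The crux is the queue-length bound, which I would prove by a staircase counting argument. Enumerate the frontier as $(a_1,b_1),\dots,(a_m,b_m)$ with $a_1<\cdots<a_m$; the antichain condition forces $b_1>\cdots>b_m$, and the fact that $S$ is a finite order ideal in $\{1,2,\dots\}^2$ forces the extremal boundary values $a_1=1$ and $b_m=1$, whence $b_i\tge m-i+1$. Decomposing $S$ column by column and summing then gives $n=|S|\tge m(m-1)/2$, so $m\tle (1+\sqrt{1+8n})/2\tle \sqrt{2n}+1$.

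For the bit-complexity, once the queue size is bounded by $O(\sqrt{n})$ one observes that for a polynomial $f$ all coordinates of interest satisfy $a,b=O(\sqrt{n})$, hence the values $f(a,b)$ fit in $O(\log n)$ bits; each of the $n$ priority-queue operations then involves $O(\log\sqrt{n})=O(\log n)$ comparisons at $O(\log n)$ bit-cost, giving total time $O(n\log^2 n)$ and queue memory $O(\sqrt{n}\log n)$. The step I expect to be delicate is the invariant of the first paragraph: one must verify that on each extraction the algorithm enqueues exactly the newly exposed minimal elements of the updated complement, neither duplicating points already in the queue nor omitting any. This bookkeeping is where the algorithm's subtlety really lies, and any slack would either inflate the queue beyond the $\sqrt{2n}+1$ bound or break monotonicity of the output.
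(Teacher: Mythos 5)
Your organisation and method track the paper's closely: establish by induction that the priority queue holds exactly $\Min$ of the remaining upper set, derive monotonicity of the output from bimonotonicity, derive exhaustion from properness, bound the queue by a staircase count, then infer bit-complexity. Your staircase count is sound and fills in a step the paper compresses to "obviously implies $n \ge \tfrac{1}{2}m(m-1)$"; one can even avoid appealing to $a_1 = 1$, $b_m = 1$ by noting directly that every pair $(a_i,b_j)$ with $i < j$ lies in the emitted set $S$, giving $\binom{m}{2}$ distinct points, hence $n \ge m(m-1)/2$ and $m \le \sqrt{2n}+1$.

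Two genuine gaps remain. First, in the bit-complexity step you claim that for polynomial $f$ all coordinates satisfy $a, b = O(\sqrt{n})$; this is false in general. For $f(a,b) = a + b^2$ the level after $n$ outputs is $z = \Theta(n^{2/3})$, and the largest $a$ on the frontier is $\Theta(z) = \Theta(n^{2/3})$, which exceeds $\sqrt{n}$. What you actually need, and what is true, is that $a$, $b$, and $f(a,b)$ are \emph{polynomially} bounded in $n$ (because $f$ is proper and restricts to non-constant polynomials along each axis), which already gives $O(\log n)$ bits per stored quantity; the paper encapsulates this in the hypothesis that $f$ "behaves polynomially." Second, you correctly identify the queue invariant as the crux but leave it unverified. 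The observation that closes it is $\{(a,b)\}^\bupper \minus \{(a,b)\} = \{(a,\succ{b}),(\succ{a},b)\}^\bupper$, so deleting the current minimum exposes at most two candidate minima; and because the list $M$ is maintained sorted with $a_1 < \dots < a_m$ and $b_1 > \dots > b_m$, one decides minimality of $(\succ{a},b)$ and $(a,\succ{b})$ by a single comparison with the adjacent list entry on each side. Without this step, the claim that the queue is precisely $\Min((\N\times\N) \minus S)$ — on which both your correctness argument and your staircase bound rest — is unsupported.
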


The precise bound $m \le \sqrt{2n}+1$ is free of hidden constants 
and thus uniformly valid for all bimonotone functions $f$.
The less explicit bounds of time $O( n \log^2 n )$ and 
memory $O( \sqrt{n} \log n )$ concern the bit-complexity  
and the hidden constants necessarily depend on $f$.
We shall assume throughout that $f$ behaves polynomially, 
see \sref{sub:TimeAndMemory}.

To place this result into perspective, notice that 
the time requirement $O(n\log^2 n)$ is nearly optimal:
enumerating $n$ elements obviously needs $n$ iterations, 
and one $\log n$ factor is due to their increasing size.
On the other hand, the standard approach would require 
memory $\Theta( n \log n )$ to store \emph{all} values 
before outputting them.  Here the stream approach can achieve 
considerable savings and reduce memory to $O( \sqrt{n} \log n )$.

\begin{example}
  Consider $f(a,b) = p(a) + q(b)$ where $p$ and $q$ 
  are non-decreasing polynomial functions of degree 
  $\alpha = \deg p$ and $\beta = \deg q$, respectively.
  Assuming $1 \le \alpha \le \beta$, Algorithm \ref{algo:BimonotoneEnumeration} 
  builds up a priority queue of length $m \in \Theta( n^\varepsilon )$ with 
  $\varepsilon = \frac{\alpha}{\alpha+\beta} \in [0,\frac{1}{2}]$.
\end{example}

This illustrates that in the uniform bound $m \in O( n^{1/2} )$,
stated in the theorem for all bimonotone functions,  
the exponent $\frac{1}{2}$ cannot be improved.  
Notwithstanding, the algorithm performs better
on certain subclasses of bimonotone functions,
where $\varepsilon < \frac{1}{2}$.

\begin{remark}
  The predecessor of our algorithm is semimonotone enumeration, 
  recalled in \sref{sec:SemimonotoneEnumeration}.
  It was devised in \cite{Ekl:1996,Bernstein:2001} 
  for polynomials of the form $f(a,b) = p(a) + q(b)$,
  where it provides the desired memory bound $O( \sqrt{n} \log n )$.
  In the more general setting of bimonotone functions, however, 
  we show that it only guarantees the memory bound $O( n \log n )$
  and the exponent $1$ can in general not be improved.
  See \sref{sec:AsymptoticComplexity} for a detailed discussion.
\end{remark}

As an additional benefit, our algorithm turns out to be highly parallelizable:

\begin{remark}
  Algorithm \ref{algo:BimonotoneEnumeration} can be adapted 
  to enumerate only those pairs $(a,b) \in \N\times\N$ for which 
  the values $f(a,b)$ lie in a given interval $[z_1,z_2]$.  
  Time and memory requirements are essentially the same as before;
  only initialization induces some additional cost 
  and can usually be neglected.  This means that searching
  solutions $f(a,b)=g(c,d)$ can be split up into disjoint 
  intervals and thus parallelized on independent machines
  (see \sref{sec:Parallelization}).
\end{remark}

\subsection{How this article is organized} \label{sub:Overview}

Section \ref{sec:SortedEnumeration} introduces the necessary notation
and recalls the generic algorithm of sorted enumeration for 
an arbitrary map $f\colon X\to Z$, where $X$ is a finite set.
Section \ref{sec:SemimonotoneEnumeration} discusses a refined algorithm,
essentially due to R.L.\,Ekl \cite{Ekl:1996} and D.J.\,Bernstein \cite{Bernstein:2001},
under the hypothesis that $f\colon A\times B\to Z$ is semimonotone, that is, 
monotone in the first variable. Section \ref{sec:BimonotoneEnumeration}
develops a sorted enumeration algorithm for bimonotone functions, 
and Section \ref{sec:AsymptoticComplexity} analyses the asymptotic complexity.
Section \ref{sec:Parallelization} highlights 
the intrinsically parallel structure of such a search problem.
Section \ref{sec:BimonotoneDomains} generalizes our algorithms
to functions $f \colon X \to Z$ restricted to suitable domains
$X \subset A \times B$ that are of of practical interest.
Finally, Section \ref{sec:Applications} briefly indicates
applications to diophantine enumeration problems,
such as the taxicab problem.


\subsection{Acknowledgements}

I thank the anonymous referee for his thorough critique, harsh but fair,
which substantially contributed to improve the exposition.



\section{Sorted enumeration for arbitrary functions} \label{sec:SortedEnumeration}

Before discussing more sophisticated versions, let us first describe 
the general problem of sorted enumeration and recall its generic solution.

\subsection{The generic problem} \label{sub:GenericProblem}

Throughout this article we consider an ordered set $(Z,\tle)$.
By \emph{order} we always mean a reflexive, transitive relation
that is complete and antisymmetric, i.e.\ each pair $z \ne z'$ 
in $Z$ satisfies either $z \tle z'$ or $z' \tle z$.
Without completeness we may have neither $z \tle z'$ nor $z' \tle z$,
in which case we speak of a \emph{partial} order.
Without antisymmetry we may have both $z \tle z'$ and $z' \tle z$,
in which case we speak of a \emph{preorder}.

We assume that $X$ is a finite or countably infinite set.
An \emph{enumeration} of $X$ is a stream $x_1,x_2,x_3,\dots$ 
in which each element of $X$ occurs exactly once.
Such an enumeration is \emph{monotone} or \emph{sorted} 
with respect to $f \colon X \to Z$ if 
it satisfies $f(x_1) \tle f(x_2) \tle f(x_3) \tle \dots$.
Whenever the function $f$ is understood from the context, 
we will simply speak of a \emph{sorted enumeration} of $X$.

\begin{remark}
  The map $f \colon X \to Z$ can be used to pull back the order $\tle$ 
  from $Z$ to the initially unordered set $X$.  More explicitly, 
  we define $x \preccurlyeq x'$ if and only if $f(x) \tle f(x')$.
  A sorted enumeration of $X$ is thus a stream in which
  all elements of $X$ appear in increasing order 
  with respect to the preorder $\preccurlyeq$.
\end{remark}

\subsection{The generic algorithm} \label{sub:GenericAlgorithm}

In the general setting, where $X$ is finite and $f$ has no further structure,
there is essentially only one way to produce a sorted enumeration:

\begin{algorithm}[H]
  \caption{\quad Sorted enumeration for an arbitrary function}
  \label{algo:SortedEnumeration}
  
  \begin{specification}
    \REQUIRE  a function $f\colon X\to Z$ from a finite set $X$ to an ordered set $Z$
    \OUTPUT   an enumeration of $X$, monotone with respect to $f$
  \end{specification}
  
  \smallskip\hrule
  
  \begin{algorithmic}[1]
    \STATE Generate a list $L$ of all pairs $(f(x),x)$ with $x\in X$.
    \STATE Sort the list $L$ according to the first coordinate $f(x)$.
    \STATE Output the arguments $x$ as sorted in the list $L$.
  \end{algorithmic}
\end{algorithm}

Algorithm \ref{algo:SortedEnumeration} is obviously correct.
Given a set $X$ of size $n$, generating and reading the list $L$ 
takes $n$ iterations, while sorting requires $O( n \log n )$ operations.
Not much optimization can be expected concerning these time requirements,
since enumeration (sorted or not) takes at least $n$ iterations. 
Memory requirements, however, may be far from optimal,
and the more specialized algorithms discussed below 
will mainly be concerned with minimizing the use of temporary memory.

\subsection{Time and memory requirements} \label{sub:TimeAndMemory}

Throughout this article we use standard asymptotic notation,
as in \cite[\textsection9]{GrahamKnuthPatashnik:1989}.
It is customary to consider the cost for storing 
and handling elements $x$ and $f(x)$ to be constant. 
This is no longer realistic when the size $n = |X|$ grows without bound.
As a typical example, consider a polynomial function 
$f \colon \N \to \Z$ restricted to $X=\{1,\dots,n\}$. 
If each element $x \in X$ is stored in binary form,
the maximal memory required is $\Theta(\log n)$.
Likewise, the maximal time to calculate, copy, 
and compare values $f(x)$ is $\Theta(\log n)$,
neglecting factors of order $\log\log n$ or less.
Most elements require nearly maximum cost, 
so we shall only consider the worst case.

In general, we say that $f$ \emph{behaves polynomially} 
if the bit-complexity per element is $\Theta(n)$, as above.
In this case we arrive at the following more realistic account:

\begin{proposition} \label{prop:NaiveCost}
  In order to enumerate a set $X$ of size $n$, 
  the generic Algorithm \ref{algo:SortedEnumeration} 
  builds up a list of size $m=n$, and thus requires 
  time $O(n \log^2 n)$ and memory of size $\Theta(n \log n)$.
  \qed
\end{proposition}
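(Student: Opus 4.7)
The plan is to verify the three claims of the proposition---list length $m=n$, memory $\Theta(n \log n)$, and time $O(n \log^2 n)$---each as a direct consequence of the three-step structure of Algorithm \ref{algo:SortedEnumeration} combined with the polynomial-cost convention laid down in \sref{sub:TimeAndMemory}.

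First, I would observe that Step 1 of the algorithm creates one pair $(f(x),x)$ for every $x \in X$, so by construction the list $L$ has length exactly $m = n$. Under the polynomial-cost hypothesis, each argument $x$ and each value $f(x)$ occupies $\Theta(\log n)$ bits. Multiplying the $n$ entries of $L$ by the $\Theta(\log n)$ bit-size per entry yields the memory bound $\Theta(n \log n)$. Since the algorithm must hold all of $L$ in memory before Step 2 can begin sorting, this is a tight lower bound as well as an upper bound.

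For the time bound I would split the cost across the three steps. Step 1 performs $n$ evaluations of $f$, each at bit-cost $\Theta(\log n)$, contributing $O(n \log n)$. Step 2 is the dominant cost: any comparison-based sort (for instance mergesort) uses $O(n \log n)$ comparisons, and each comparison of two $\Theta(\log n)$-bit integers itself costs $\Theta(\log n)$ bit-operations, for a total of $O(n \log^2 n)$. Step 3 is a single linear pass, $O(n \log n)$, and is absorbed into the previous bound.

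The only real subtlety---and what I expect to be the main source of confusion rather than a genuine obstacle---is to keep the bit-complexity model consistent: one must track that each comparison, although $O(1)$ in the unit-cost RAM model, carries an extra $\Theta(\log n)$ factor once the bit-length of the elements grows with $n$. This is precisely what accounts for the asymmetry between the $\log n$ factor in the memory bound and the $\log^2 n$ factor in the time bound. Once this convention is applied uniformly to the three steps, the proposition follows immediately, which also explains why the author concludes with a bare \texttt{\textbackslash qed}.
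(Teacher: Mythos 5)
Your proof is correct and follows exactly the approach the paper intends: the paper omits the argument (hence the bare \texttt{\textbackslash qed}) because it is meant to fall out directly from the bit-complexity convention set up in the preceding paragraph, and your write-up simply fills in those details — $n$ entries of $\Theta(\log n)$ bits each for memory, and $O(n\log n)$ comparisons at $\Theta(\log n)$ bit-cost each for the dominant sorting step. One minor note: the paper's phrase ``the bit-complexity per element is $\Theta(n)$'' is evidently a typo for $\Theta(\log n)$, as the ``as above'' reference and your reading both confirm.
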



\section{Sorted enumeration for semimonotone functions} \label{sec:SemimonotoneEnumeration}

In this section we consider a semimonotone function $f\colon A\times B\to Z$.
By this we mean that $(A,\tle)$ is an ordered set
and $a\tle a'$ implies $f(a,b) \tle f(a',b)$ for all $b\in B$.
This is the same as saying that $f$ is monotone
with respect to the partial order $(a,b) \sle (a',b')$ 
defined by the condition $a \tle a'$ and $b = b'$.

\subsection{The idea of semimonotone enumeration} \label{sub:SemimonotoneIdea}

We will first assume that $A$ and $B$ are finite sets.
This entails that $(A,\tle)$ is isotonic 
to an interval $\{1,\dots,l\}$ of integers.
The minimal and maximal element of $A$ is 
denoted by $\amin$ and $\amax$, respectively,
and the successor function is denoted by $a\mapsto\succ{a}$.
Of course $\amax$ cannot have a successor in $A$,
so by convention we set $\succ{\amax}=+\infty$.

We equip $X = A \times B$ with the partial order $\sle$ as defined above.
Given a subset $X_i \subset X$, we denote by $M_i = \Min(X_i)$ 
the set of its minimal elements. Conversely, $M_i$ defines its upper set
$M_i^\supper = \{\, x\in X \mid m\sle x \text{ for some } m\in M_i \,\}$.
Figure \ref{fig:minima1} shows a subset $X_i$ (indicated by crosses)
together with its set of minima $M_i$ (circled crosses). 
In this example $X_i$ is \emph{saturated} in the sense that $X_i = M_i^\supper$.

\begin{figure}[htbp]
  \includegraphics[height=50mm]{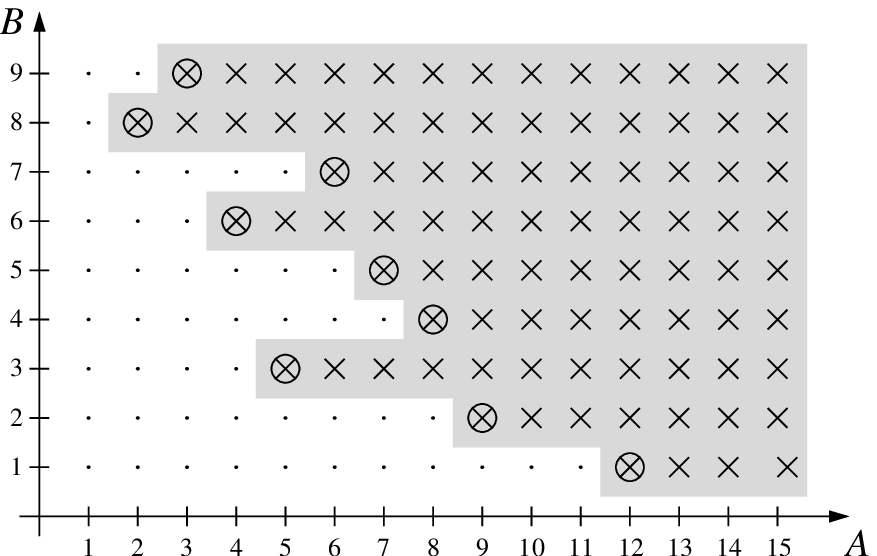}
  \caption{A subset of $A\times B$ and its minima with respect to $\sle$}
  \label{fig:minima1}
\end{figure} 

Since $f$ is monotone with respect to $\sle$,
the minimum of $f(X_i)$ is attained on $M_i$.
It thus suffices to find $x_i\in M_i$ realizing $f(x_i) = \min f(M_i)$.
We can then output $x_i$ and continue with the set $X_{i+1} = X_i \minus \{x_i\}$.
Notice that $X_{i+1}$ is again saturated and 
$M_{i+1}$ can be easily constructed from $M_i$.
This is the idea of Algorithm \ref{algo:SemimonotoneEnumeration} below.


\subsection{Suitable data structures} \label{sub:SemimonotoneDataStructures}

The following algorithm has been independently developed by 
R.L.\,Ekl \cite{Ekl:1996} and D.J.\,Bernstein \cite{Bernstein:2001},
and formalizes the above approach:
instead of handling the entire set $X_i$, 
it operates on two smaller sets, $M = \Min(X_i)$ and $F = f(M)$.
In order to efficiently find $x_i\in M$ realizing $f(x_i) = \min f(M)$,
we store the set of images $f(M)$ in a priority queue $F$. 
Recall that a priority queue $F$ for elements of $(Z,\tle)$ 
provides the following elementary operations:
\begin{itemize}
\item Inserting an element $z\in Z$ into $F$ (``push'').
\item Reading and removing a minimal element of $F$ (``pop'').
\end{itemize}

Priority queues are typically implemented using a heap or a binary tree; 
in either case the elementary operations need $O(\log m)$ steps,
where $m$ is the number of elements in the priority queue.
For a general presentation see Knuth \cite[\textsection5.2.3]{Knuth:vol3}.

\subsection{The semimonotone enumeration algorithm} \label{sub:SemimonotoneAlgorithm}

Instead of $f \colon A \times B \to Z$ it is more convenient 
to work with the map $f^* \colon A \times B \to Z \times A \times B$ 
defined by $f^*(a,b) = (f(a,b),a,b)$.  In our formulation of 
Algorithm \ref{algo:SemimonotoneEnumeration} we thus use 
a priority queue $F$ for elements in $Z \times A \times B$, 
sorted by the first coordinate. 

\begin{algorithm}[htbp]
  \caption{\quad Sorted enumeration for a semimonotone function} 
  \label{algo:SemimonotoneEnumeration}
  
  \begin{specification}
    \REQUIRE  a semimonotone function $f \colon A \times B \to Z$ 
    \OUTPUT   an enumeration of $A \times B$, monotone with respect to $f$
  \end{specification}
  
  \smallskip\hrule
  
  \begin{algorithmic}[1]
    \STATE Start with an empty priority queue $F$, then insert $f^*(\amin,b)$ for all $b\in B$.
    \WHILE{$F$ is non-empty}
      \STATE Remove a minimal element $f^*(a,b)$ from $F$ and output $(a,b)$.
      \IFTHEN{ $\succ{a} < +\infty$ }{ insert $f^*(\succ{a},b)$ into $F$ }
    \ENDWHILE
  \end{algorithmic}
\end{algorithm}

\begin{remark}
  All algorithms presented here can be regarded as templates, 
  to be instantiated for the given map $f \colon A \times B \to Z$.
  Alternatively, one could consider them as taking the sets 
  $A$ and $B$ and the map $f$ as input data.  In this case, of course, 
  we do not pass the entire sets $A$ and $B$ as parameters, nor the map $f$, 
  say as some subset of $A \times B \times Z$:  for finite sets
  this would be as inefficient as Algorithm \ref{algo:SortedEnumeration};
  for infinite sets it is simply impossible.

  Instead, it suffices to call some function that calculates 
  $f(a,b)$ for any given pair of parameters $a \in A$ and $b \in B$.
  To represent the sets $A$ and $B$, all we need is 
  the usual \emph{iterator} concept, providing a pointer to 
  the first (and possibly the last) element of the set and 
  a method for incrementing, denoted by $a \mapsto \succ{a}$ above. 
  (Algorithms \ref{algo:ConstructMinima} and \ref{algo:BimonotoneMinima} 
  also decrement, denoted by $b \mapsto \pred{b}$.)

  One can easily add suitable specifications when passing to concrete 
  implementations.  For the present general exposition, however, 
  we shall maintain the slightly coarser description, trying to strike 
  a balance between general concepts and implementation details.
\end{remark}

Algorithm \ref{algo:SemimonotoneEnumeration} is obviously correct.
The point is, as motivated above, that it usually uses less memory 
than the generic Algorithm \ref{algo:SortedEnumeration}.

\begin{proposition} \label{prop:SemimonotoneFinite}
  In order to enumerate a set $X = A \times B$ of size $n$,
  Algorithm \ref{algo:SemimonotoneEnumeration} 
  only builds up a priority queue of size $m = |B|$. 
  Let $f \colon \N \times \N \to \Z$ be a semimonotone map 
  that behaves polynomially.  Applied to subsets $A = \{1,\dots,l\}$
  and $B = \{1,\dots,m\}$ with $m \ge 2$, the algorithm thus takes 
  time $O(n \log n \log m)$ and requires memory $\Theta(m \log n)$.
\end{proposition}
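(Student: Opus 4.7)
The plan is to prove the proposition in two stages: first a purely combinatorial bound on the length of the priority queue $F$, and then a translation of this bound into bit-complexity via the polynomial-behavior convention of \sref{sub:TimeAndMemory}.

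For the queue-size bound I would establish and maintain the invariant that, at every moment of execution, $F$ contains at most one entry whose third component equals $b$, for each $b \in B$. This holds immediately after the initialization loop, where exactly one entry $f^*(\amin,b)$ is inserted per $b$. In the main loop, the only modification is to pop some $f^*(a,b)$ and, if $\succ{a} < +\infty$, push $f^*(\succ{a},b)$; since the new triple shares its second coordinate $b$ with the one just removed, the invariant is preserved. Consequently $|F| \le m$ throughout, and $|F| = m$ just after initialization, so the maximum length of $F$ is exactly $m$.

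For the complexity bounds I would invoke the convention that $f$ behaves polynomially, so that each triple $(f(a,b),a,b)$ occupies $\Theta(\log n)$ bits and each elementary comparison, copy, or evaluation of $f$ costs $\Theta(\log n)$ bit operations. Since $F$ holds at most $m$ such triples, the memory required is $\Theta(m \log n)$, the lower bound being attained already after initialization. For the running time, the main loop executes exactly $n$ times; each iteration performs one pop, one evaluation of $f$, and at most one push, requiring $O(\log m)$ comparisons on a heap of size $\le m$, hence $O(\log m \cdot \log n)$ bit operations. Summing yields $O(n \log m \log n)$; the initialization loop contributes a further $O(m \log m \log n)$, which is absorbed into this bound.

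There is no serious obstacle in this argument. The only point requiring mild care is the consistent accounting of the polynomial-behavior convention, so that comparisons, copies, evaluations of $f$, and successor queries are all uniformly counted at $\Theta(\log n)$ bit operations each. The hypothesis $m \ge 2$ enters only to avoid the degenerate case $\log m = 0$ in the asymptotic formula for the running time.
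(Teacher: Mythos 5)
Your proposal is correct and takes essentially the same approach as the paper's proof, which simply asserts that the queue holds $m$ elements of size $\Theta(\log n)$ and charges $O(\log n \log m)$ per iteration for the heap update over $n$ iterations. Your explicit one-entry-per-$b$ invariant is just a careful formalization of the queue-size claim that the paper states without argument, and your accounting of the initialization cost is a minor extra detail the paper omits.
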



\begin{proof}
  The algorithm needs memory to hold 
  $m$ elements $f^*(a,b)$ in the priority queue $F$.
  Since most elements need memory of size $\Theta(\log n)$, 
  we arrive at a total memory cost of $\Theta(m \log n)$.
  During each one of the $n$ iterations, the most time consuming operation 
  is updating the priority queue $F$ which requires time $O(\log n \log m)$.
  Here $m$ is the size of the queue and $\log n$ is the typical size of its elements.
\end{proof}

\begin{remark}
  Notice that in the degenerate case $|B|=1$, 
  Algorithm \ref{algo:SemimonotoneEnumeration} 
  simply enumerates $A$ in increasing order,
  which takes time $O(n \log n)$ and memory $\Theta(\log n)$.
  In the opposite extreme $|A|=1$, it sorts $B$ with respect to $f$ 
  via heap-sort. We thus fall back on the generic 
  Algorithm \ref{algo:SortedEnumeration},
  which takes time $O(m \log^2 m)$ and space $\Theta(m \log m)$.
\end{remark}

\subsection{Enumerating infinite sets} \label{sub:SemimonotoneInfinite}

Sorted enumeration can be generalized from finite to infinite sets.
First of all, in order to be amenable to enumeration, 
$A$ must be either finite or isotonic to the natural numbers.
Moreover, we have to require that $f \colon A \times B \to Z$ be 
a \emph{proper} map in the sense that for every $z \in \im(f)$ 
only finitely many pairs $(a,b)$ satisfy $f(a,b) \le z$. 
(This condition actually implies that $A$ is finite or isotonic to $\N$.) 
Of course, we also have to assume that comparisons and all other operations 
are computable; as before their cost will be assumed to be of order $O(\log n)$.

\begin{proposition} \label{prop:SemimonotoneSemifinite}
  Suppose that $A$ is an infinite ordered set and $B$ is a finite set of size $m$.
  Let $f\colon A\times B\to Z$ be a proper semimonotone map.
  Then Algorithm \ref{algo:SemimonotoneEnumeration} produces a stream
  enumerating all pairs $(a,b)\in A\times B$ such that 
  the values $f(a,b)$ appear in increasing order.
  Producing the first $n$ values takes time 
  $O(n \log n \log m)$ and requires memory $\Theta(m \log n)$.
\end{proposition}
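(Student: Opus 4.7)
The plan is to establish a loop invariant describing the state of the priority queue, then use semimonotonicity to verify the output is sorted, properness to verify that every pair is eventually emitted, and finally copy the complexity analysis from Proposition \ref{prop:SemimonotoneFinite}. Throughout, let $a_b^{(i)} \in A$ denote, after $i$ iterations, the least element such that $(a_b^{(i)}, b)$ has not yet been output; since $A$ is infinite, $\succ{a}$ is always finite, so $a_b^{(i)} \in A$ for all $i$ and $b$. I would prove by induction on $i$ that the priority queue $F$ contains exactly the $m$ entries $\{\, f^*(a_b^{(i)}, b) : b \in B \,\}$, and that the set of outputs so far equals $\{\, (a,b) \in A \times B : a < a_b^{(i)} \,\}$. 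The base case is the initialization loop; the inductive step is immediate from the pop/push rule and shows that the queue retains exactly $m$ entries at every iteration.

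Sortedness of the output follows from semimonotonicity. If iteration $i+1$ pops $f^*(a,b)$ as a minimum of $F$, the remaining $m-1$ entries have first coordinate $\tge f(a,b)$ by choice of minimum, and the newly pushed entry $f^*(\succ{a}, b)$ satisfies $f(\succ{a}, b) \tge f(a,b)$ by semimonotonicity in the first variable. Hence every subsequent pop returns a value $\tge f(a,b)$, so the output is monotone.

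The main obstacle is completeness: every pair $(a,b) \in A \times B$ must eventually appear. Fix such a target with $z = f(a,b)$ and suppose toward a contradiction it is never output. The pointer $a_b^{(i)}$ only moves when column $b$ is popped, and then only to $\succ{a_b^{(i)}}$; since $A$ is discretely ordered (isotonic to an initial segment of $\N$), the pointer cannot skip over $a$ without at some step equalling $a$ and outputting $(a,b)$. Thus $a_b^{(i)} \tle a$ for all $i$, and semimonotonicity yields $f(a_b^{(i)}, b) \tle z$ permanently, so every popped value is $\tle z$. But properness forces only finitely many pairs to satisfy $f(a',b') \tle z$, and by the invariant each pair is popped at most once (the column pointers are strictly increasing per column). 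Hence only finitely many iterations can occur, contradicting the fact that the queue is perpetually full and so the algorithm runs forever.

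The complexity analysis then follows verbatim from the proof of Proposition \ref{prop:SemimonotoneFinite}: the queue maintains constant size $m$ throughout, each of its entries has bit-size $\Theta(\log n)$ after $n$ outputs thanks to the polynomial behavior of $f$, each of the $n$ iterations performs one pop and one push at cost $O(\log m \cdot \log n)$ for a total of $O(n \log n \log m)$, and the stored queue occupies $\Theta(m \log n)$ memory.
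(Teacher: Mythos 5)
Your proof is correct, and it takes a genuinely different route from the paper. The paper argues by reduction to the finite case: for any level $z \in \im(f)$ the set $\{\, (a,b) \mid f(a,b) \le z \,\}$ is finite, hence contained in a bounded rectangle $[\amin,a_1] \times B$, on which the algorithm behaves exactly as in the finite case covered by Proposition \ref{prop:SemimonotoneFinite}; letting $z$ range over $\im(f)$ then shows the enumeration is correct and exhausts $A \times B$. You instead give a self-contained, invariant-based argument: you pin down the queue contents exactly (one frontier entry $f^*(a_b^{(i)},b)$ per column $b$), derive sortedness directly from semimonotonicity plus minimality of the pop, and prove exhaustion by contradiction using properness and the fact that the queue never empties while column pointers strictly increase. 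Your version is more explicit and does not presuppose the finite-case enumeration is correct, at the cost of being longer; the paper's reduction is terser but leans on work already done. One small point worth flagging in your argument: the claim that the pointer $a_b^{(i)}$ cannot skip over $a$ uses that $A$ has a successor function with no gaps, i.e.\ that $A$ is isotonic to $\N$; the paper notes this follows from properness together with $A$ being infinite, so you are entitled to it, but it is good that you made the assumption explicit. The complexity analysis in both cases is inherited verbatim from Proposition \ref{prop:SemimonotoneFinite}.
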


\begin{proof}
  For every $z\in\im(f)$ the set $\{\, (a,b)\in A\times B \mid f(a,b)\le z \,\}$
  is finite, and thus contained in some finite product $[\amin,a_1] \times B$.
  Hence Algorithm \ref{algo:BimonotoneEnumeration} correctly enumerates 
  all parameters $(a,b)$ with $f(a,b)\le z$ as in the finite case.
  Since this is true for all $z$, the enumeration exhausts $A\times B$.
  Bit-complexity behaves as in Proposition \ref{prop:SemimonotoneFinite}.
\end{proof}

We wish to adapt semimonotone enumeration 
to the case where both $A$ and $B$ are infinite.
Algorithm \ref{algo:SemimonotoneEnumeration} is certainly
not suited for this task, because the initialization
will get stuck in an infinite loop.  As a necessary restriction
we require that $f \colon A \times B \to Z$ be proper, and
as before we assume that $f$ is monotone with respect to $A$.
For every $z \in \im(f)$, we can thus enumerate the finite set 
$\{ f \tle z \} := \{ (a,b) \in A \times B \mid f(a,b) \tle z \}$ 
by applying Algorithm \ref{algo:SemimonotoneEnumeration} to the relevant finite set 
$B(z) = \pr_2 \{ f \tle z \} = \{ b \in B \mid f(\amin,b) \le z \}$.

In order to formulate an explicit algorithm, 
we assume that the set $B$ is ordered and that 
$f_2 \colon B \to Z$, $f_2(b) = f(\amin,b)$ is non-decreasing.
This is strictly weaker than demanding $f$ to be bimonotone, because 
we require monotonicity in $b$ only on the axis $\{\amin\} \times B$.
This technical condition ensures that we can 
easily construct the relevant finite set $B(z)$.
In fact, the monotonicity of $f_2$ is not at all restrictive, 
because we can \emph{choose} the order on $B$, for example by pulling back 
the order on $Z$ via $f_2$ to a preorder on $B$, and then refining 
to an order by arbitrating collisions. 
In other words, the order on $B$ is just a convenient way to encode
some preparatory analysis of the proper map $f_2 \colon B \to Z$.

This idea is formalized in Algorithm \ref{algo:SesquimonotoneEnumeration},
which is a slight variation of Algorithm \ref{algo:SemimonotoneEnumeration}.
The only difference is that it automatically adapts the relevant interval 
$B(z) = \left[ \bmin, \bmax \right[$ according to the level $z$ 
attained during the enumeration.  

\begin{algorithm}[htbp]
  \caption{\quad Sorted enumeration for a semimonotone function} 
  \label{algo:SesquimonotoneEnumeration}
  
  \begin{specification}
    \REQUIRE  a proper semimonotone function $f \colon A \times B \to Z$
    \OUTPUT   an enumeration of $A \times B$, monotone with respect to $f$.
  \end{specification}
  
  \smallskip\hrule
  
  \begin{algorithmic}[1]
    \STATE Initialize the priority queue $F \gets \Queue{ f^*(\amin,\bmin) }$ 
           and set $\bmax \gets \bmin$.
    \WHILE{$F$ is non-empty}
      \STATE Remove a minimal element $f^*(a,b)$ from $F$ and output $(a,b)$.
      \IFTHEN{ $\succ{a} < +\infty$ }{ insert $f^*(\succ{a},b)$ into $F$ }
      \IF{ $b = \bmax $ }
      \STATE Set $\bmax \gets \sigma\bmax$.
      \IFTHEN{ $\bmax < +\infty$ }{ insert $f^*(\amin,\bmax)$ into $F$ }
      \ENDIF
    \ENDWHILE
  \end{algorithmic}
\end{algorithm}

Here we have formulated Algorithm \ref{algo:SesquimonotoneEnumeration}
so that it applies to finite and infinite sets alike.  If $A$ or $B$
is infinite, then $\succ{a} < +\infty$ or $\bmax < +\infty$, respectively, 
is always true and the corresponding test can be omitted.

\begin{proposition} \label{prop:SemimonotoneInfinite}
  Suppose that both $A$ and $B$ are infinite ordered sets and that 
  $f \colon A \times B \to Z$ is a proper semimonotone function.
  We also assume that $b \mapsto f(\amin,b)$ is non-decreasing.
  Then Algorithm \ref{algo:SemimonotoneEnumeration} provides a sorted 
  enumeration of $A \times B$.  While enumerating the first $n$ values, 
  it builds up a priority queue of length $m \le n+1$.  This ensures that 
  the algorithm takes time $O( n \log^2 n )$ and memory $O( n \log n )$.
  \qed
\end{proposition}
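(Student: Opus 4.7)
The plan is to reduce the infinite case to the already-handled semifinite case (Proposition~\ref{prop:SemimonotoneSemifinite}) for correctness, and then to analyze the queue-length growth directly to get the complexity bounds.

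For correctness, I would first observe the key monotone invariant: after $i$ iterations, the priority queue $F$ contains exactly the set $f^*(M_i)$, where $M_i$ is the set of $\sle$-minimal elements of the saturated subset $X_i \subset A \times [\bmin, \bmax]$ consisting of all pairs not yet output. This is proved by induction on $i$: popping $(a,b)$ removes a minimal element, pushing $f^*(\succ a, b)$ restores a minimum in its column, and the column-extension step adds the new minimum $(\amin, \sigma\bmax)$ exactly when the discovered rectangle must grow to stay saturated. To upgrade this to an exhaustive sorted enumeration of $A \times B$, I fix an arbitrary $z \in \im(f)$ and consider the finite set $B(z) = \{ b \in B \mid f(\amin, b) \tle z \}$; the assumption that $b \mapsto f(\amin,b)$ is non-decreasing makes $B(z)$ an initial segment $[\bmin, b(z)]$ of $B$. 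I would then argue that, up to the moment the algorithm first outputs a pair whose $f$-value exceeds $z$, its behavior is identical to that of Algorithm~\ref{algo:SemimonotoneEnumeration} applied to $A \times B(z)$: by semimonotonicity, the queue can only expand to a new column $\sigma\bmax$ when the current minimum has $f$-value $\tge f(\amin, \sigma\bmax)$, so the columns in $B(z)$ are all opened before any value above $z$ is popped. Invoking Proposition~\ref{prop:SemimonotoneSemifinite} on $A \times B(z)$ then gives a sorted enumeration of $\{f \tle z\}$, and properness of $f$ lets $z$ range over $\im(f)$ to cover all of $A \times B$.

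For the queue-length bound, I would track the net size change per iteration. Each iteration pops one element, pushes the successor $f^*(\succ a, b)$ (always defined since $A$ is infinite), and pushes $f^*(\amin, \sigma\bmax)$ only in the column-extension case. Thus the net change is either $0$ or $+1$, and starting from size $1$ the queue length after $n$ iterations satisfies $m \le 1 + n$, as claimed. Since $f$ behaves polynomially, each stored triple occupies $O(\log n)$ bits, giving a memory bound of $O(n \log n)$. Each queue operation costs $O(\log m \cdot \log n) = O(\log^2 n)$, and there are $O(n)$ such operations over $n$ iterations, yielding the time bound $O(n \log^2 n)$.

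The one subtle step is the second half of the correctness argument: ensuring that no relevant column $b \in B(z)$ is missed when the algorithm first crosses level $z$. I would handle it by a simple monotonicity argument on the $\bmax$ variable rather than a separate induction, namely: at every moment the minimum of $F$ lower-bounds every value to be output in the future, so whenever $f(\amin, \sigma\bmax) \tle z$, the extension step will necessarily have been triggered before any output value exceeds $z$. Everything else is bookkeeping inherited from Proposition~\ref{prop:SemimonotoneSemifinite}.
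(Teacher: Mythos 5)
Your proof is correct and follows the same route the paper sketches (the paper states this proposition with only a \qed, relying on the surrounding discussion of Algorithm \ref{algo:SesquimonotoneEnumeration}): reduce correctness to the finite column sets $B(z)$ using the monotonicity of $b \mapsto f(\amin,b)$, and bound the queue length by the net gain of at most one element per iteration starting from size one. One minor wording slip: the column $\sigma\bmax$ is opened at the moment the popped minimum equals $f(\amin,\bmax) \tle f(\amin,\sigma\bmax)$, not when the current minimum is ``$\tge f(\amin,\sigma\bmax)$'' as you write mid-proof, but your final paragraph states the needed implication correctly, so this is not a gap.
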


Semimonotone functions are tailor-made for applications 
where we have monotonicity in $a$ but not necessarily in $b$.
They are halfway towards bimonotone functions,
which are more restrictive but support much better algorithms.
These will be discussed next.


\section{Sorted enumeration for bimonotone functions} \label{sec:BimonotoneEnumeration}

In this section we finally turn to bimonotone functions $f\colon A\times B\to Z$.
By this we mean that both $(A,\tle)$ and $(B,\tle)$ are ordered sets,
and that $a\le a'$ and $b\le b'$ implies $f(a,b)\le f(a',b')$.
This is the same as saying that $f$ is monotone 
with respect to the partial order $(a,b) \ble (a',b')$ 
defined by $a\tle a'$ and $b\tle b'$.

\subsection{The idea of bimonotone enumeration} \label{sub:BimonotoneIdea}

We will first assume that both sets $A$ and $B$ are finite.
Given a subset $X_i \subset X$ we denote by $M_i = \Min(X_i)$ 
the set of its minimal elements with respect to $\ble$.
Conversely, $M_i$ defines its upper set 
$M_i^\bupper = \{\, x\in X \mid m\ble x \text{ for some } m\in M_i \,\}$.
Figure \ref{fig:minima1} shows a subset $X_i$ (indicated by crosses)
together with its set of minima $M_i$ (circled crosses). 
In this example $X_i$ is \emph{saturated} in the sense that $X_i = M_i^\bupper$.

\begin{figure}[htbp]
  \includegraphics[height=50mm]{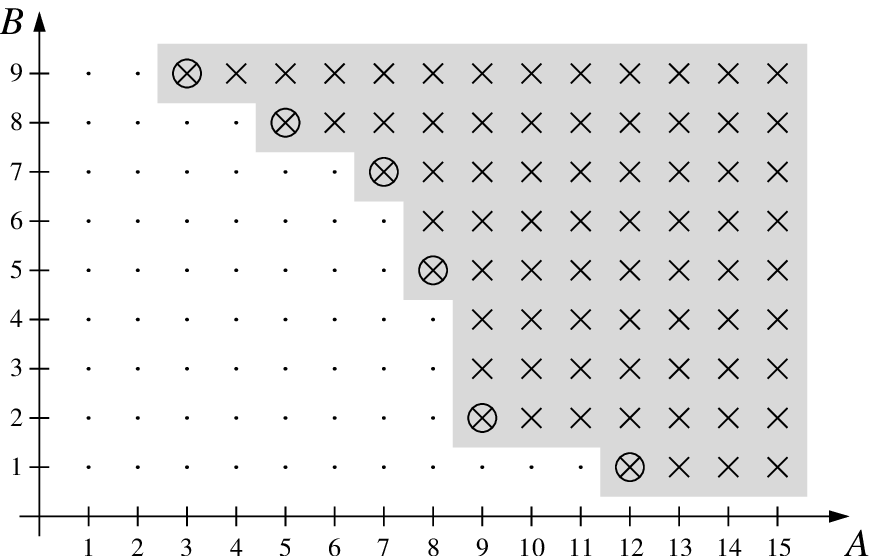}
  \caption{A subset of $A\times B$ and its minima with respect to $\ble$}
  \label{fig:minima2}
\end{figure} 

Since $f$ is monotone with respect to $\ble$,
the minimum of $f(X_i)$ is attained on $M_i$.
It thus suffices to find $x_i\in M_i$ realizing $f(x_i) = \min f(M_i)$.
We can then output $x_i$ and continue with the set 
$X_{i+1} = X_i \minus \{x_i\}$, which is again saturated.
Moreover, it is possible to construct $M_{i+1}$ directly from $M_i$,
without having to construct $X_i$ or $X_{i+1}$. 
(See Algorithm \ref{algo:BimonotoneEnumeration} below.)
Thus, instead of searching the entire set $X_i$,
we only need to keep track of $M_i$, the set of minimal elements.

\subsection{Suitable data structures} \label{sub:BimonotoneDataStructures}

According to the previous remark, the bimonotone enumeration 
algorithm will operate on two sets: $M = \Min(X_i)$ and $F = f(M)$.
The set $M$ can profitably be implemented as a list 
$\List{(a_1,b_1),(a_2,b_2),\dots,(a_m,b_m)}$ with $a_i\in A$ and $b_i\in B$.
During the algorithm, the list $M$ will always be ordered in the sense that
$a_1 < a_2 < \dots < a_m$ and  $b_1 > b_2 > \dots > b_m$,
as already indicated in Figure \ref{fig:minima2}.
We call $(a_k,b_k)$ the \emph{predecessor} of $(a_{k+1},b_{k+1})$ in $M$,
and conversely $(a_{k+1},b_{k+1})$ the \emph{successor} of $(a_k,b_k)$ in $M$.
By convention the predecessor of $(a_1,b_1)$ is $(-\infty,+\infty)$,
and the successor of $(a_m,b_m)$ is $(+\infty,-\infty)$.

Given an element $(a,b)$ in the list $M$, the required operations are:
\begin{itemize}
\item Finding the successor or predecessor of $(a,b)$ in $M$.
\item Inserting an element into $M$ right after $(a,b)$.
\item Removing $(a,b)$ from $M$.
\end{itemize}

The cost of these operations can be assumed to be $O(\log n)$, 
which is the typical cost for storing and handling 
one of the elements of the set $X = A\times B$ of size $n$. 
In particular, the cost is independent of the size $m = |M|$. 
For details on bidirectional lists see Knuth \cite[\textsection2.2.5]{Knuth:vol1},
or any other textbook on algorithms and data structures.

As before, the set $F=f(M)$ will be implemented as a priority queue 
containing the values $f(a,b)$ for all $(a,b)$ in $M$.
It is recommendable to store $f(a,b)$ together 
with a pointer to the element $(a,b)$ in the list $M$.
This allows us to extract $(a,b)$, and, moreover, 
we can directly address $(a,b)$ in $M$ without searching the list.
For notational convenience we will not explicitly 
mention this pointer in the sequel.

\subsection{The bimonotone enumeration algorithm} \label{sub:BimonotoneAlgorithm}

Having suitable data structures at our disposal, it is 
an easy matter to formalize bimonotone enumeration 
(Algorithm \ref{algo:BimonotoneEnumeration}).

\begin{algorithm}[htbp]
  \caption{\quad Sorted enumeration for a bimonotone function} 
  \label{algo:BimonotoneEnumeration}
  
  \begin{specification}
    \REQUIRE  a bimonotone function $f \colon A \times B \to Z$
    \OUTPUT   an enumeration of $A \times B$, monotone with respect to $f$
  \end{specification}
  
  \smallskip\hrule
  
  \begin{algorithmic}[1]
    \STATE Initialize $M \gets \List{ (\amin,\bmin) }$
           and $F \gets \Queue{ f(\amin,\bmin) }$.
    \WHILE{$F$ is non-empty}
      \STATE Remove a minimal element $f(a,b)$ from $F$ and output $(a,b)$.
      \STATE Let $(a^*,b_*)$ be the successor of $(a,b)$ in the list $M$.
      \IF{ $\succ{a}<a^*$ }
        \STATE Insert $(\succ{a},b)$ into $M$ right after $(a,b)$ 
               and insert $f(\succ{a},b)$ into $F$.
      \ENDIF
      \STATE Let $(a_*,b^*)$ be the predecessor of $(a,b)$ in the list $M$.
      \IF{ $\succ{b}<b^*$ }
        \STATE Insert $(a,\succ{b})$ into $M$ right after $(a,b)$ 
               and insert $f(a,\succ{b})$ into $F$.
      \ENDIF
      \STATE Remove $(a,b)$ from the list $M$.
    \ENDWHILE
  \end{algorithmic}
\end{algorithm}

The only subtlety of this algorithm is updating the list $M$.
We want to remove $(a,b)$, of which we know that it is a minimal element of $X_i$.
The set of elements strictly greater than $(a,b)$ is given by
\[
\{ (a,b) \}^\bupper \minus \{ (a,b) \} = \{\, (a,\succ{b}), (\succ{a},b) \,\}^\bupper.
\]
Hence, removing $(a,b)$ creates at most two new minima, $(a,\succ{b})$ and $(\succ{a},b)$.
It is easy to check whether they are actually minimal for $X_i \minus \{(a,b)\}$: 
since our list $M$ of minima is ordered, it suffices to compare $(a,\succ{b})$ to 
the predecessor $(a_*,b^*)$, and $(\succ{a},b)$ to the successor $(a^*,b_*)$.

To illustrate the different possibilities, 
we consider Figure \ref{fig:minima2} again.
The following table indicates, for each possible minimum $(a,b)$,
how the list $M$ has to be modified in order to obtain a new ordered 
list of minima satisfying $M^\bupper = X_i \minus \{(a,b)\}$:

\begin{center}
\smallskip
\renewcommand{\arraystretch}{1.2}
\begin{tabular}{|c||c|c|c||c|c|c|}
\hline
$(a,b)$        & $(a^*,b_*)$          & $(\succ{a},b)$   & insert?
               & $(a_*,b^*)$          & $(a,\succ{b})$   & insert? \\
\hline
$(3,9)$        & $(5,8)$              & $(4,9)$          & yes
               & $(-\infty,+\infty)$  & $(3,+\infty)$    & no \\
$(5,8)$        & $(7,7)$              & $(6,8)$          & yes
               & $(3,9)$              & $(5,9)$          & no \\
$(7,7)$        & $(8,5)$              & $(8,7)$          & no
               & $(5,8)$              & $(7,8)$          & no \\
$(8,5)$        & $(9,2)$              & $(9,5)$          & no 
               & $(7,7)$              & $(8,6)$          & yes \\
$(9,2)$        & $(12,1)$             & $(10,2)$         & yes
               & $(8,5)$              & $(9,3)$          & yes \\
$(12,1)$       & $(+\infty,-\infty)$  & $(13,1)$         & yes
               & $(9,2)$              & $(12,2)$         & no \\
\hline
\end{tabular}
\smallskip
\end{center}

\begin{lemma}
  Algorithm \ref{algo:BimonotoneEnumeration} is correct:
  if $A$ and $B$ are finite ordered sets and 
  $f\colon A\times B\to Z$ is a bimonotone map,
  then Algorithm \ref{algo:BimonotoneEnumeration} produces 
  a stream enumerating all pairs $(a,b) \in A \times B$ 
  such that the values $f(a,b)$ appear in increasing order.
\end{lemma}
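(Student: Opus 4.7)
The plan is to prove correctness by establishing and maintaining a loop invariant that captures the intuition of \sref{sub:BimonotoneIdea}: after $i$ iterations the algorithm has output the first $i$ elements in a valid sorted enumeration, and the list $M$ equals $\Min(X_{i+1})$ (ordered as described) with $F = f(M)$, where $X_{i+1} = (A\times B)\minus\{x_1,\dots,x_i\}$ is saturated. I would proceed by induction on $i$, starting from the base case $X_1 = A\times B$, for which $\Min(X_1) = \{(\amin,\bmin)\}$ matches the initialization.

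For the inductive step, assume the invariant holds at the start of an iteration. Because $f$ is bimonotone and $X_i = M^\bupper$ is saturated, $\min f(X_i) = \min f(M)$, so the element $(a,b)\in M$ popped from the priority queue realizes the next smallest value of $f$; outputting it is therefore consistent with a sorted enumeration. Set $X_{i+1} := X_i \minus \{(a,b)\}$. The identity
\[
\{(a,b)\}^\bupper \minus \{(a,b)\} \;=\; \{(a,\succ{b}),(\succ{a},b)\}^\bupper
\]
(which uses that $\ble$ is the product order) shows that $X_{i+1}$ is again saturated and that
\[
\Min(X_{i+1}) \;\subseteq\; \bigl(M \minus \{(a,b)\}\bigr) \,\cup\, \{(a,\succ{b}),(\succ{a},b)\},
\]
so the only candidates for new minima are $(a,\succ b)$ and $(\succ a, b)$.

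The key step is to verify that the tests in the algorithm add exactly those candidates that are genuinely minimal. Using the inductive ordering $a_1<\dots<a_m$, $b_1>\dots>b_m$ of $M$, one shows that $(\succ{a},b)$ is minimal in $X_{i+1}$ if and only if it is not dominated by the successor $(a^*,b_*)$ of $(a,b)$ in $M$, and since $b<b_*$ fails (because $b_*<b$ in our ordering), the only relevant test is $\succ{a}<a^*$; the symmetric analysis applies to $(a,\succ{b})$ and the predecessor $(a_*,b^*)$. Comparing with elements of $M$ farther away is unnecessary because the ordered structure of $M$ propagates dominations transitively. One then checks that the insertions preserve the ordered form of $M$ (the new elements are placed adjacent to $(a,b)$, where they fit by construction), so the invariant is restored for $X_{i+1}$.

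Termination and completeness follow by observing that $|X_i|$ strictly decreases at each iteration, so after $n = |A\times B|$ iterations $X_{n+1} = \emptyset$, forcing $M$ and hence $F$ to be empty and the loop to exit; conversely, while $X_i$ is non-empty it has a minimal element, so $M$ and $F$ are non-empty and the loop continues. The main potential obstacle is the bookkeeping in the third step above — namely, ruling out that some element of $M\minus\{(a,b)\}$ becomes non-minimal, or that other candidates besides $(a,\succ b)$ and $(\succ a,b)$ enter $\Min(X_{i+1})$; both are handled by the product-order identity displayed above together with the ordered structure of the list $M$.
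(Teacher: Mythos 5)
Your proof is correct and follows essentially the same inductive strategy as the paper's: maintain the invariant that at the start of each iteration $M$ is the ordered list of minima of the saturated set $X_i$ of remaining parameters, with $F = f(M)$, and argue that popping the minimum of $F$ and applying the local insertion tests restores the invariant for $X_{i+1} = X_i \minus \{x_i\}$. You fill in somewhat more detail than the paper (which summarizes the inductive step as a list of assertions declared ``straightforward to verify''), in particular spelling out why comparing only with the adjacent neighbours $(a^*,b_*)$ and $(a_*,b^*)$ in the ordered list suffices to decide minimality of the two candidates, but this is the same argument in expanded form rather than a different route.
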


\begin{proof}
  At the beginning of the $i$-th iteration of the algorithm
  we denote $M$ by $M_i$, and $F$ by $F_i$, and 
  the set of remaining parameters by $X_i := M_i^\bupper$.
  The initialization states that $M_1=\List{(\amin,\bmin)}$
  and $F_1 = \Queue{f(\amin,\bmin)}$, so $X_1=A\times B$.
  
  By induction we can assume that the set $X_i$ is of size $n-i+1$
  and saturated, with $M_i = \Min(X_i)$ and $F_i = f(M_i)$.
  Furthermore, we can assume that the list representing $M_{i}$ is ordered
  in the sense that $\List{(a_1,b_1),(a_2,b_2),\dots,(a_m,b_m)}$ 
  satisfies $a_1<a_2<\dots<a_m$ and $b_1>b_2>\dots>b_m$.
  It is straightforward to verify that the $i$-th iteration 
  of our algorithm ensures the following assertions:
  
  \begin{itemize}
  \item The output $x_i$ satisfies $f(x_i) = \min F_i = \min f(M_i) = \min f(X_i)$.
  \item The set $X_{i+1} = X_i \minus \{x_i\}$ is saturated and of size $n-i$.
  \item We have $M_{i+1} = \Min(X_{i+1})$ and $F_{i+1}=f(M_{i+1})$.
  \item The new list representing the set $M_{i+1}$ is again ordered.
  \end{itemize}
  
  The algorithm stops after $n$ iterations when it reaches 
  $X_{n+1}=\emptyset$, hence $M_{n+1}=\emptyset$ and $F_{n+1}=\emptyset$.
  We conclude that the output sequence $x_1,x_2,\dots,x_n$
  is an enumeration of $A\times B$ satisfying 
  $f(x_1)\tle f(x_2)\tle \dots \tle f(x_n)$.
\end{proof}

Since the sorted enumeration algorithm outputs one element $x_i$
at each iteration, the loop is repeated exactly $n = |A|\cdot|B|$ times.
At the $i$-th iteration, the algorithm occupies memory 
of size $m_i = |M_i|$ to store the list $M_i$ and the priority queue $F_i$. 
Let $m = \max m_i$ 
be the maximum during the entire execution.

\begin{lemma} \label{lem:AsymptoticBounds}
  In order to enumerate a set $X = A \times B$ of size $n$,
  Algorithm \ref{algo:BimonotoneEnumeration} only 
  builds up a priority queue of length $m \le \min\{|A|,|B|\}$,
  which entails in particular $m \le \sqrt{n}$.
\end{lemma}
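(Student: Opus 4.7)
The plan is to exploit the structural invariant on the list $M_i$ that was already established in the correctness proof, namely that it has the form $\langle (a_1,b_1),\dots,(a_m,b_m)\rangle$ with $a_1 < a_2 < \dots < a_m$ and $b_1 > b_2 > \dots > b_m$. Given this, the bound is essentially immediate: I only need to observe that strict monotonicity of the $a$-coordinates forces them to be pairwise distinct elements of $A$, so $m \le |A|$, and likewise strict monotonicity of the $b$-coordinates forces $m \le |B|$. Combining these gives $m \le \min\{|A|,|B|\}$.

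For the second inequality, I would apply the elementary estimate $\min\{|A|,|B|\}^2 \le |A|\cdot|B| = n$, which yields $m \le \sqrt{n}$. Since $|F_i| = |f(M_i)| \le |M_i|$, the same bound governs the size of the priority queue.

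The only step that requires a bit of care is making sure that the ordered-list invariant actually persists through every iteration, because the bound is really about the maximum over all $i$, not merely over the initial configuration. Fortunately, the correctness lemma already contains this as one of its four inductive assertions (\emph{the new list representing $M_{i+1}$ is again ordered}), so I can simply invoke it rather than re-proving it. Thus the main conceptual content of the lemma is the realization that once the list is kept ordered in this staircase fashion, each of its elements is uniquely determined by either its $a$-coordinate or its $b$-coordinate, which pins down the length.

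I do not anticipate any real obstacle; the statement is essentially a one-line consequence of the invariant, and the factor $\sqrt{n}$ follows from the trivial inequality $\min(x,y) \le \sqrt{xy}$ for non-negative reals. The only thing to double-check is the base case and the degenerate situations where $A$ or $B$ is a singleton, but these are handled uniformly by the same invariant.
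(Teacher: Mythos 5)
Your proposal is correct and follows exactly the paper's argument: the staircase invariant ($a$-coordinates strictly increasing, $b$-coordinates strictly decreasing) makes both projections $M \to A$ and $M \to B$ injective, giving $m \le \min\{|A|,|B|\} \le \sqrt{|A|\cdot|B|} = \sqrt{n}$. Your extra remark about invoking the ordered-list invariant from the correctness lemma is precisely the justification the paper relies on implicitly.
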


\begin{proof}
  During the enumeration algorithm the list representing $M$ 
  is always strictly increasing in $a$ and strictly decreasing in $b$.
  In particular, the projections $M\to A$ and $M\to B$ are both injective.
  The required memory $m$ is thus bounded by $\min\{|A|,|B|\}$.
\end{proof}



\begin{proposition} \label{prop:AsymptoticBounds}
  Let $f \colon \N \times \N \to \Z$ be a bimonotone map 
  that behaves polynomially.  Applied to subsets $A = \{1,\dots,l\}$
  and $B = \{1,\dots,m\}$ with $m \ge 2$, Algorithm \ref{algo:BimonotoneEnumeration} 
  takes time $O(n \log^2 n)$ and requires memory $O( \sqrt{n} \log n )$.
\end{proposition}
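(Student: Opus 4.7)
The plan is to combine the size bound from Lemma \ref{lem:AsymptoticBounds} with the per-element bit-complexity estimates from \sref{sub:TimeAndMemory}. Since Lemma \ref{lem:AsymptoticBounds} already gives $m \le \min\{|A|,|B|\}$, the first step is the purely arithmetic observation that $\min\{l,m\} \le \sqrt{lm} = \sqrt{n}$, so throughout the run of Algorithm \ref{algo:BimonotoneEnumeration} the list $M$ and the priority queue $F$ contain at most $\sqrt{n}$ items simultaneously.

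For the memory bound I would then note that, because $f$ behaves polynomially in the sense of \sref{sub:TimeAndMemory}, each stored element $(a,b)$ together with its value $f(a,b)$ and the auxiliary pointer between $M$ and $F$ occupies $\Theta(\log n)$ bits. Multiplying the $O(\sqrt{n})$ bound on the number of stored items by the $\Theta(\log n)$ bit-cost per item gives total memory $O(\sqrt{n}\log n)$, as claimed.

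For the time bound I would argue iteration by iteration. Each execution of the \textbf{while} loop performs a bounded number of list operations (finding the successor and predecessor of $(a,b)$, inserting and deleting entries) and priority-queue operations (one pop, and at most two pushes), plus at most two evaluations of $f$. Under the polynomial-behavior assumption, each $f$-evaluation, comparison, or copy of a value costs $O(\log n)$ in bit-complexity, while a single push/pop on a heap of size $m \le \sqrt{n}$ with such elements costs $O(\log m \cdot \log n) = O(\log^2 n)$. Thus one iteration runs in $O(\log^2 n)$ bit-operations. Since the loop body is executed exactly $n$ times (one output per iteration, by the correctness lemma), the total running time is $O(n\log^2 n)$.

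The proof is essentially bookkeeping; the only place that could be considered a genuine obstacle is the justification of the $\sqrt{n}$ bound on the queue length, but that is already handled by Lemma \ref{lem:AsymptoticBounds}. The remaining work is simply to multiply the per-item and per-iteration costs by the bounds on queue size and iteration count, taking care that constants and $\log\log n$ factors are absorbed into the $O(\cdot)$ notation.
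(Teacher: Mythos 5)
Your proposal is correct and follows essentially the same route as the paper: both arguments rest on the queue-length bound $m \le \min\{|A|,|B|\} \le \sqrt{n}$ from Lemma \ref{lem:AsymptoticBounds}, charge $\Theta(\log n)$ bits per stored element for the memory bound, and charge $O(\log n \log m) = O(\log^2 n)$ per priority-queue update over the $n$ iterations for the time bound. Your version merely spells out the per-iteration bookkeeping (list operations, $f$-evaluations, pushes and pops) in more detail than the paper does.
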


\begin{proof}
  The loop is repeated $n$ times. The most time consuming operation 
  is updating the priority queue $F_i$ to $F_{i+1}$
  which requires time $O(\log n \log m_i)$,
  where $m_i$ is the size of the queue $F_i$ 
  and its elements are typically of size $\Theta(\log n)$.
  The total cost is time $O(n \log n \log m)$
  and memory $\Theta(m \log n)$.  
  With $m \le \sqrt{n}$ we obtain the stated bounds.
\end{proof}

\begin{example}
  The following extreme cases illustrate
  Algorithm \ref{algo:BimonotoneEnumeration}
  and the possible behaviour of the memory bound $m$.
  We consider $f \colon A \times B \to \N$
  where $A=\{1,\dots,k\}$ and $B=\{1,\dots,l\}$
  are two intervals of integers, with $k\ge l\ge 2$ say.

  The best case occurs for $f(a,b) = l a + b$,
  where Algorithm \ref{algo:BimonotoneEnumeration}
  enumerates $A\times B$ in lexicographic order.
  During the $i$-th iteration of the algorithm 
  the set of minima $M_i$ contains only $1$ or $2$ elements,
  so that $m = 2$, independent of the sizes $|A|$ and $|B|$.
  
  The worst case occurs for $f(a,b)=a+b$.
  Having enumerated all elements $x_i$ with $f(x_i)\le l$,
  the list $M$ contains exactly $l$ elements,
  namely $(1,l),(2,l-1),\dots,(l,1)$.
  Thus the upper bound $m=\min\{|A|,|B|\}$ is actually attained.
\end{example}

\subsection{Enumerating infinite sets} \label{sub:BimonotoneInfinite}

As with semimonotone enumeration, bimonotone enumeration 
can be generalized from finite to infinite sets.
The interesting point is that now both sets $A$ and $B$ 
can be infinite, and the algorithm applies without change.

\begin{theorem} \label{thm:AsymptoticBounds}
  Suppose that $A$ and $B$ are ordered sets and that
  $f\colon A\times B\to Z$ is a proper bimonotone map.
  Then Algorithm \ref{algo:BimonotoneEnumeration} produces a stream
  enumerating all pairs $(a,b)\in A\times B$ such that 
  the values $f(a,b)$ appear in increasing order.
  In order to enumerate the first $n$ values, the algorithm 
  builds up a priority queue of length at most $\sqrt{2n}+1$.
  If $f$ behaves polynomially, the algorithm takes time 
  $O(n\log^2 n)$ and requires memory $O(\sqrt{n} \log n)$.
\end{theorem}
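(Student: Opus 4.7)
The plan is to prove three things: correctness on infinite domains, the uniform bound $m \le \sqrt{2n}+1$, and the bit-complexity estimates. Throughout I would rely on the invariants already established for Algorithm~\ref{algo:BimonotoneEnumeration} in the finite case.

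Correctness on infinite sets reduces to the finite case via properness. For any $z \in \im(f)$, the set $S_z = \{(a,b) \in A \times B : f(a,b) \le z\}$ is finite, hence contained in some finite rectangle $A_0 \times B_0 \subset A \times B$. As long as the current minimum in the priority queue $F$ does not exceed $z$, the algorithm behaves identically to its restriction to $A_0 \times B_0$, and so by the correctness lemma of \sref{sub:BimonotoneAlgorithm} outputs every element of $S_z$ in sorted order. Letting $z$ range over $\im(f)$ then exhausts $A \times B$ in sorted order.

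The queue bound is the heart of the argument. At the start of iteration $i$, after $i-1$ outputs, the list $M_i = \List{(a_1,b_1),\dots,(a_m,b_m)}$ is ordered so that $a_1 < \cdots < a_m$ and $b_1 > \cdots > b_m$. I claim that for every pair of indices $1 \le k < j \le m$ the point $(a_k,b_j)$ has already been output. Clearly $(a_k,b_j) \in A \times B$, and it is distinct from every element of $M_i$ because $a$-coordinates are strictly increasing. If $(a_k,b_j)$ still belonged to the saturated remaining set $X_i = M_i^\bupper$, some index $l$ would satisfy $a_l \le a_k$ (forcing $l \le k$) and $b_l \le b_j$ (forcing $b_l \ge b_k > b_j$), a contradiction. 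Hence all $\binom{m}{2}$ points $(a_k,b_j)$ with $k < j$ are distinct and previously output, so
\[
\binom{m}{2} = \frac{m(m-1)}{2} \le i - 1 \le n - 1.
\]
Solving $m(m-1) \le 2n$ yields $m \le (1+\sqrt{1+8n})/2 \le \sqrt{2n}+1$, the last inequality reducing to $0 \le 4\sqrt{2n}$ after squaring. Since each iteration adds at most two elements to $M$ and removes one, the same bound applies, up to a harmless additive constant, to the transient configurations inside an iteration.

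The complexity estimates then follow from the polynomial-growth hypothesis on $f$. Each stored pair and each value $f(a,b)$ occupies $\Theta(\log n)$ bits, and every comparison or arithmetic operation costs $O(\log n)$. Each iteration performs a bounded number of list and priority-queue operations, each of cost $O(\log m \cdot \log n) = O(\log^2 n)$, giving total time $O(n \log^2 n)$; memory is $O(m \log n) = O(\sqrt n \log n)$. The main obstacle is the combinatorial staircase argument: recognising that $m$ minima force $\binom{m}{2}$ distinct interior lattice points to have been previously output is what converts the geometric structure of $M_i$ into the quadratic inequality $\binom{m}{2} \le n$; all other steps are either reductions to the finite correctness lemma or routine bit-complexity bookkeeping.
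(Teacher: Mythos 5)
Your proof is correct and follows essentially the same route as the paper: reduction of the infinite case to the finite one via properness, the inequality $n \ge \tfrac{1}{2}m(m-1)$ coming from the staircase structure of $M$, and the standard bit-complexity bookkeeping. The only difference is that you spell out, via the $\binom{m}{2}$ previously output points $(a_k,b_j)$ with $k<j$, the combinatorial step the paper dismisses as "obvious" --- a welcome elaboration, not a deviation.
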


\begin{proof}
  For every $z\in\im(f)$ the set $\{\, (a,b)\in A\times B \mid f(a,b)\le z \,\}$
  is finite, it is thus contained in some finite product $[\amin,a_1] \times [\bmin,b_1]$.
  Hence Algorithm \ref{algo:BimonotoneEnumeration} correctly enumerates 
  all parameters $(a,b)$ with $f(a,b)\le z$ as in the finite case.
  Since this is true for all $z$, the enumeration exhausts $A\times B$.
  
  Let us suppose that after $n$ outputs the list $M$ holds $m$ pairs $(a_i,b_i)$
  ordered such that $a_1 < a_2 < \dots < a_m$ and $b_1 > b_2 > \dots > b_m$.
  This obviously implies $n \ge \frac{1}{2}m(m-1)$, whence $m \le \sqrt{2n}+1$.
  The time needed for $n$ outputs is thus 
  $O(n \log n \log\sqrt{n}) = O(n\log^2 n)$, 
  while the required memory is $O(\sqrt{n} \log n)$.
\end{proof}

\begin{example} \label{exm:TypicalExponents}
  Again the worst case occurs for the map $f \colon \N \times \N \to \Z$ 
  with $f(a,b)=a+b$, where $m \sim \sqrt{2n}$.
  The best case occurs for $f(a,b) = \max\{a,b\}$, where $m \le 4$.
\end{example}

This example shows that for bimonotone enumeration 
the memory bound $m \in O(\sqrt{n})$ is best possible:
there exist bimonotone functions $f$ for which 
Algorithm \ref{algo:BimonotoneEnumeration}
actually requires temporary memory $m \sim \sqrt{2n}$.  
Notwithstanding, the algorithm performs significantly 
better on certain subclasses of bimonotone functions:

\begin{proposition}[separate variables] \label{prop:TypicalExponents}
  Consider $f \colon \N \times \N \to \Z$ with
  $f(a,b) = p(a) + q(b)$, where $p$ and $q$ 
  are non-decreasing polynomial functions of degree 
  $\alpha = \deg p$ and $\beta = \deg q$, respectively.
  Assuming $1 \le \alpha \le \beta$, the bimonotone enumeration 
  algorithm requires memory $m \in \Theta(n^\varepsilon)$ with 
  exponent $\varepsilon = \frac{\alpha}{\alpha+\beta}$. 
\end{proposition}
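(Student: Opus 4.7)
The plan is to analyse Algorithm~\ref{algo:BimonotoneEnumeration} at the \emph{level moments} when it has just finished outputting all pairs with $f(a,b) \le z$, for varying thresholds $z \in \Z$. At such a moment the list $M$ coincides with $\Min\{(a,b) \in \N \times \N : f(a,b) > z\}$ taken with respect to $\ble$, and between two consecutive level moments the length of $M$ varies by at most $O(1)$. Writing $n(z) := |\{f \le z\}|$ and $m(z) := |\Min\{f > z\}|$, it suffices to show $n(z) = \Theta(z^{1/\alpha + 1/\beta})$ and $m(z) = \Theta(z^{1/\beta})$; eliminating $z$ then yields $m = \Theta(n^{\alpha/(\alpha+\beta)})$.

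Counting $n(z)$ is routine. Using that $p$ and $q$ are non-decreasing polynomials of degrees $\alpha$ and $\beta$ with positive leading coefficients, one obtains $|\{a \in \N : p(a) \le t\}| = \Theta(t^{1/\alpha})$ and $|\{b \in \N : q(b) \le t\}| = \Theta(t^{1/\beta})$ for large $t$. Summing the first of these over all $b$ with $q(b) \le z$ and comparing to the corresponding Riemann integral gives $n(z) = \sum_{q(b) \le z} \Theta((z - q(b))^{1/\alpha}) = \Theta(z^{1/\alpha + 1/\beta})$, hence $z = \Theta(n^{\alpha\beta/(\alpha+\beta)})$.

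The delicate step is estimating $m(z)$. For each integer $b$ with $q(b) \le z$, define $a^*(b) := \min\{a \in \N : p(a) + q(b) > z\}$; then $a^*$ is non-increasing in $b$, and the minima of $\{f > z\}$ with respect to $\ble$ are exactly the pairs $(a^*(b), b)$ at which $a^*(b) < a^*(b-1)$. Hence $m(z)$ equals the number of distinct values attained by $a^*$. The upper bound $m(z) = O(z^{1/\beta})$ is immediate from Lemma~\ref{lem:AsymptoticBounds}, since the effective ranges of $a$ and $b$ have sizes $\Theta(z^{1/\alpha})$ and $\Theta(z^{1/\beta})$, and $\alpha \le \beta$. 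For the matching lower bound I would estimate the typical drop $a^*(b) - a^*(b+1)$ by its continuous analogue $q'(b)/p'(a^*(b))$ via the mean value theorem; at scales $q(b) \sim p(a^*(b)) \sim z$ this ratio is of order $z^{1/\alpha - 1/\beta} \ge 1$, so $a^*$ strictly decreases at a positive fraction of $b \in [1, b_{\max}]$, yielding $m(z) = \Omega(z^{1/\beta})$.

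The principal obstacle is the lower bound in the third step: turning the continuous differential estimate into a rigorous discrete count of strict drops of the integer-valued function $a^*$, while controlling the lower-order polynomial terms of $p$ and $q$ and the two boundary regimes where either $a^*(b)$ or $b$ is small. None of these affects the leading exponent, but each needs a careful Riemann-type estimate. Once established, combining $m(z) = \Theta(z^{1/\beta})$ with $z = \Theta(n^{\alpha\beta/(\alpha+\beta)})$ produces the claimed $m = \Theta(n^{\alpha/(\alpha+\beta)})$.
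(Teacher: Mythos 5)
Your proposal follows essentially the same route as the paper: both reduce to estimating $n(z)=\Theta\bigl(z^{1/\alpha+1/\beta}\bigr)$ and $m(z)=\Theta\bigl(z^{1/\beta}\bigr)$ at a fixed level $z$ and then eliminate $z$, and your lower bound on $m(z)$ via the drop ratio $q'(b)/p'(a^*(b))\ge 1$ is precisely the paper's observation that the contour is steeper than slope $-1$ except above the point with normal $(1,1)$, whose height is $o(z^{1/\beta})$. One caveat: your claim that $|M|$ varies by $O(1)$ between consecutive level moments is unjustified (the number of pairs at a given level can be large) but also unnecessary --- for the stated $\Theta$-bound you only need the lower bound at a single level moment plus an upper bound $|M_i|=O(z^{1/\beta})$ valid at intermediate moments, which follows from the injectivity of the projection $M\to B$ together with $f(a,\pi b)\le z$ for every $(a,b)\in M_i$.
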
 

For example, sorted enumeration of $f(a,b) = a^3 + b^7$
requires memory $m \in \Theta(n^{3/10})$, while the a priori bound 
of Theorem \ref{thm:AsymptoticBounds} only tells us $m \in O(n^{1/2})$.

\begin{proof}
  Let $f \colon \N \times \N \to \N$ be defined by 
  $f(a,b) = a^\alpha + b^\beta$ with $1 \le \alpha \le \beta$.
  The general case $f(a,b) = p(a) + q(b)$ works essentially 
  the same, but notation is more cumbersome.

  Suppose that the $n$-th output $x_n$ has attained the level $f(x_n)=z$,
  and the list $M$ holds $m$ parameters $(a_1,b_1),\dots,(a_m,b_m)$.
  Then we have $a_1=0$ and $f(a_1,b_1) = b_1^{\smash\beta} \ge z$.
  On the other hand $(a_1,b_1-1)$ has already been output, 
  which means $(b_1-1)^\beta \le z$.  
  We conclude that $\sqrt[\beta]{z} \le b_1 \le \sqrt[\beta]{z} + 1$, 
  whence $b_1 \sim \sqrt[\beta]{z}$.  
  Analogously $\sqrt[\alpha]{z} \le a_m \le \sqrt[\alpha]{z} + 1$,
  whence $a_m \sim \sqrt[\alpha]{z}$.
  This situation is depicted in Figure \ref{fig:convex}.
  In the sequel we set $a:= a_m$ and $b:= b_1$. 

  \begin{figure}[htbp]
    \includegraphics[scale=1.0]{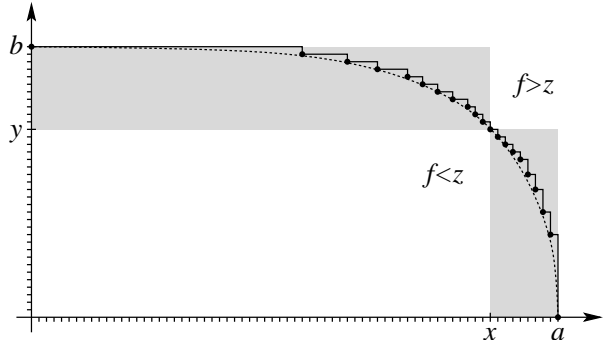}
    \caption{Estimating the size of the set $\Min_{\ble} \{ a^\alpha + b^\beta \ge z \}$}
    \label{fig:convex}
  \end{figure} 

  The upper bound $n \le (a+1) (b+1)$ is clear. 
  Since $f$ is convex, we also have the lower bound $n > \frac{1}{2} a b$.
  To see this, apply Pick's theorem to count integer points 
  in the triangle $\Delta = [\,(0,0), (a,0), (0,b)\,]$.
  Both inequalities together imply that 
  $n \in \Theta( z^{(\alpha+\beta)/\alpha\beta} )$,
  or equivalently, $z \in \Theta( n^{\alpha\beta/(\alpha+\beta)} )$.
  
  The upper bound $m \le b + 1$ 
  is clear, and it remains to establish a lower bound.  
  We will assume $\alpha < \beta$.  
  (The symmetric case $\alpha = \beta$ is easier and will be 
  examined more closely in Example \ref{exm:gamma} below.)
  There exists a unique point $(x,y) \in \R_+^2$ on the contour
  whose normal vector points in the direction $(1,1)$:
  this is the solution of $x^\alpha + y^\beta = z$ 
  and $\alpha x^{\alpha-1} = \beta y^{\beta-1}$.
  It is easy to see that $m \ge b - \lceil y \rceil$ and 
  $m \sim (a - x) + (b - y)$ as indicated in Figure \ref{fig:convex}.
  We have $x = c y^{(\beta-1)/(\alpha-1)}$ 
  with $c = \sqrt[\alpha-1]{\beta/\alpha}$,
  whence $c^\alpha y^{\alpha(\beta-1)/(\alpha-1)} + y^\beta = z$.
  Since $\alpha(\beta-1)/(\alpha-1) > \beta$ 
  we deduce that $y \in o( \sqrt[\beta]{z})$. 
  The bounds $b - \lceil y \rceil \le m \le b+1$
  thus entail $m \sim \sqrt[\beta]{z}$,
  whence $m \in \Theta( n^{\alpha/(\alpha+\beta)} )$.
\end{proof}


We remark that in the above examples semimonotone 
enumeration achieves the same asymptotic bounds.  
This warrants a more detailed analysis, which we endeavour next.

\section{Asymptotic complexity} \label{sec:AsymptoticComplexity}

We are now ready to address the crucial question:
is bimonotone enumeration (Algorithm \ref{algo:BimonotoneEnumeration})
better than semimonotone enumeration (Algorithm \ref{algo:SesquimonotoneEnumeration})?
We shall compare the size $m$ of the priority queue built up during the algorithm.
The test class consists of all proper bimonotone functions
$f \colon \N \times \N \to \Q$, which is where both algorithms apply.  
First of all, the following observation is worth emphasizing:

\begin{remark}
  Bimonotone enumeration is at least as good as semimonotone enumeration.
  More explicitly, both algorithms have to trace the contour of the finite set
  \[
  \{ f \tle z \} := \{ (a,b) \in A \times B \mid f(a,b) \tle z \}
  \]
  and construct the set of minima of the complement, $\Min \{ f > z \}$.
  To this end semimonotone enumeration uses the partial order 
  $(a,b) \sle (a',b')$ defined by $a \tle a'$ and $b = b'$.
  (Here we are ordering with respect to $a$ for fixed $b$;
  since $f$ is bimonotone, we could also order with respect 
  to $b$ for fixed $a$, whichever is advantageous.)
  Bimonotone enumeration uses the partial order $(a,b) \ble (a',b')$ 
  defined by $a \tle a'$ and $b \tle b'$.  This entails the inclusion
  \[
  \Min_{\ble} \{ f > z \} \subset \Min_{\sle} \{ f > z \} .
  \] 
  This means that the priority queue for bimonotone enumeration 
  is a subset of the queue for semimonotone enumeration,
  and consequently the required memory is less or equal.
 
  At this point we should clarify a possible ambiguity.
  Both Algorithms \ref{algo:SesquimonotoneEnumeration} 
  and \ref{algo:BimonotoneEnumeration} have to choose
  \emph{one} minimal element of the priority queue.
  In order to disambiguate multiple minima, we choose 
  the one with minimal $B$-coordinate.  This ensures 
  that it belongs to both $\Min_{\ble} \{ f > z \}$
  and $\Min_{\sle} \{ f > z \}$, and the 
  inclusion propagates inductively.
\end{remark}

Whether the bimonotone algorithm can achieve 
a significant improvement depends on the function $f$.
Let us begin with a trivial example where no savings are possible:
  
\begin{example}[linear contour]
  Consider $f \colon \N \times \N \to \N$ defined by
  $f(a,b) = (a + b)^\gamma$ with $\gamma \ge 1$.
  In this case $\Min_{\ble} \{ f > z \} = \Min_{\sle} \{ f > z \}$
  is given by the line $a + b = 1 + \lfloor \sqrt[\gamma]{z} \rfloor$.
\end{example}

In general, however, the inclusion 
$\Min_{\ble} \{ f > z \} \subset \Min_{\sle} \{ f > z \}$ is strict.  
Generally speaking, bimonotone enumeration adapts better to the contour and 
achieves savings whenever the contour deviates from being a straight line.  
We now quantify this observation.

\subsection{Polynomial functions}

Consider $f \colon \N \times \N \to \Q$ defined by 
a polynomial $f(a,b) = \sum_{i,j} c_{ij} a^i b^j$ 
with rational coefficients $c_{ij} \ge 0$.
This condition ensures that $f$ is bimonotone.

Let $f_1(a) = f(a,0)$ and $f_2(b) = f(0,b)$ be the induced polynomial functions 
on the axes, and set $\alpha := \deg f_1$ and $\beta := \deg f_2$.
We assume that $\alpha,\beta \ge 1$, which ensures that $f$ is proper.
Without loss of generality we can also assume that $\alpha \le \beta$.

Let $\gamma := \deg f = \max \{ i+j \mid c_{ij} \ne 0 \}$
be the total degree of $f$.  We have $\alpha \le \beta \le \gamma$.

We denote by $n := \sharp \{ f \tle z \}$ the number of values of $f$
up to some level $z$, and by $m := \sharp \Min \{ f > z \}$ 
the length of the priority queue at level $z$.  

  
\begin{proposition} \label{prop:CompareMemory}
  Semimonotone enumeration of the set $\{ f \tle z \}$ 
  requires memory $m \in \Theta(\sqrt[\beta]{z})$ whereas 
  bimonotone enumeration requires memory $m \in O(\sqrt[\gamma]{z})$.
\end{proposition}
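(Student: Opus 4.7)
The plan is to analyze the size of the priority queue at level $z$ separately for each algorithm.

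For semimonotone enumeration, I would first establish the invariant that at every iteration of Algorithm \ref{algo:SesquimonotoneEnumeration} the queue $F$ contains exactly one triple per integer $b \in [\bmin, \bmax]$, namely $(f(a_b, b), a_b, b)$ where $a_b$ denotes the least element of $A$ such that $(a_b, b)$ has not yet been output. Since $A$ is infinite the successor $\succ{a}$ always exists, so each iteration pops one triple and reinserts one at the same $b$, plus possibly a new triple at $b = \succ{\bmax}$; the invariant follows by induction. Hence the queue size is exactly $\bmax - \bmin + 1$. The next step is to quantify $\bmax$ just after the algorithm finishes outputting $\{ f \tle z \}$. Let $b^+$ be the largest integer with $f_2(b^+) := f(\amin, b^+) \tle z$. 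One shows that $(\amin, b^+)$ is the first pair output at $b$-coordinate $b^+$; at that moment $\bmax = b^+$ is incremented to $b^+ + 1$, after which $\bmax$ cannot grow further because $f_2(b^+ + 1) > z$ and bimonotonicity in $a$ prevents any pair at $b = b^+ + 1$ from being output with $f \tle z$. Since $f_2$ has degree $\beta$ with positive leading coefficient, $b^+ \in \Theta(z^{1/\beta})$ and therefore $m \in \Theta(z^{1/\beta})$.

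For bimonotone enumeration, I would exploit the total degree by introducing the diagonal polynomial $f_\Delta(t) := f(t,t) = \sum_{i,j} c_{ij}\, t^{i+j}$. This is a univariate polynomial of degree exactly $\gamma$; its leading coefficient $\sum_{i+j = \gamma} c_{ij}$ is strictly positive because all $c_{ij}$ are non-negative and at least one term of total degree $\gamma$ is non-zero. The key observation is: if $(a, b) \in \Min_\ble \{ f > z \}$ satisfies $a \tle b$ and $a \ge 1$, then the minimality condition forces $f(a-1, b) \tle z$, and bimonotonicity applied to $(a-1, a-1) \ble (a-1, b)$ gives $f_\Delta(a-1) \tle z$. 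The dominant-term estimate for $f_\Delta$ then forces $a - 1 \in O(z^{1/\gamma})$. Since each value of $a$ supports at most one bimonotone minimum (the one with minimal $b$ making $f(a,b) > z$), the number of minima with $a \tle b$ lies in $O(z^{1/\gamma})$; by swapping the roles of $a$ and $b$ the same bound holds for minima with $a \tge b$, and adding these yields $m \in O(z^{1/\gamma})$.

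The main obstacle I would expect is purely notational: ensuring that the off-diagonal terms of $f$ cannot spoil the estimate $f_\Delta(t) \ge c\, t^\gamma$ for large $t$. This however is automatic under the non-negativity of the coefficients $c_{ij}$, since each off-diagonal term only contributes positively to the diagonal values and cannot lower the degree of $f_\Delta$. The remaining edge cases (the at most one minimum with $a = 0$ in the bimonotone count, the precise relationship $\bmax = b^+ + 1$ at termination in the semimonotone count) each contribute additive constants that are absorbed into the asymptotic $O$ and $\Theta$ notation.
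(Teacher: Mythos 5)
Your proof is correct and takes essentially the same approach as the paper: the semimonotone bound comes from the axis polynomial $f_2(b)=f(\amin,b)$ of degree $\beta$, and the bimonotone bound comes from the diagonal polynomial $f(t,t)$ of degree $\gamma$. You simply flesh out two steps that the paper leaves implicit (the priority-queue invariant $|F|=\bmax-\bmin+1$, and the counting argument behind $m\le 1+2x$ via "at most one minimum per $a$-value"), which adds detail but not a new idea.
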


Whenever $\beta < \gamma$, bimonotone enumeration 
is thus significantly better than semimonotone enumeration.
As an example, for $f(a,b) = a^4 + a^3 b^4 + b^5$ 
semimonotone enumeration requires memory $m \in \Theta(\sqrt[5]{z})$, 
while bimonotone enumeration requires only $m \in O(\sqrt[7]{z})$.

\begin{proof}
  Assuming $\beta \ge \alpha$, it is advantageous 
  to sort by $a$ in the semimonotone enumeration algorithm.
  In this case we see that $m = 2+b$ where $b$ satisfies $f_2(b) \le z < f_2(b+1)$.  
  We have $f_2(b) \sim c b^\beta$ with some leading coefficient $c > 0$,
  whence $m \sim \sqrt[\beta]{z/c}$.

  \begin{figure}[htbp]
    \includegraphics[scale=1.0]{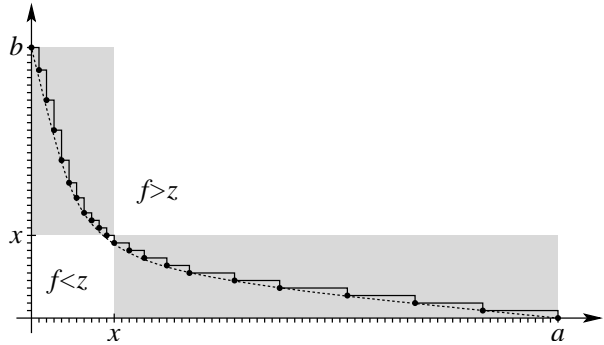}
    \caption{Estimating the size of the set $\Min_{\ble} \{ f \ge z \}$}
    \label{fig:concave}
  \end{figure} 

  Evaluating $f$ on the diagonal, we find 
  $f(x,x) = d_0 + d_1 x + \dots + d_\gamma x^\gamma$
  with non-negative coefficients $d_k \ge 0$ and $d_\gamma > 0$.
  For $x = 1 + \lfloor \sqrt[\gamma]{z/d_\gamma} \rfloor$ we obtain $f(x,x) > z$.  
  This is illustrated in Figure \ref{fig:concave}, where the dotted line corresponds 
  to $f = z$, and black dots represent the elements of $\Min_{\ble} \{ f \ge z \}$.
  We conclude that $m \le 1+2x$, whence $m \in O(\sqrt[\gamma]{z})$.
\end{proof}

\subsection{Asymptotic bounds}

In order to express the required memory $m$ in terms of 
the number $n$ of enumerated values, we wish to relate $n$ and $z$.
For $z \to \infty$ we can replace counting points $(a,b) \in \N^2$ 
satisfying $f(a,b) \le z$ by the Lebesgue measure of the set 
$\{ (x,y) \in \R_+^2 \mid f(x,y) \le z \}$.

\begin{proposition} \label{prop:TropicalExponent}
  Let $f \colon \R_+^2 \to \R_+$ be a polynomial function given by 
  \[
  f(x,y) = \sum_{(i,j) \in K} c_{ij} x^i y^j \qquad
  \text{ with $c_{ij} > 0$ for all indices $(i,j) \in K$.}
  \]
  With $f$ we associate the convex polygon 
  $D = \{ (u,v) \in \R^2 \mid i u + j v \le 1 \text{ for all } (i,j) \in K \}$.
  Suppose that $f$ is proper in the sense that for all $z \in \R_+$ 
  the set 
  \[
  \{ f \le z \} = \{ (x,y) \in \R_+^2 \mid f(x,y) \le z \}
  \]
  is compact.  Then its Lebesgue measure satisfies
  $\lambda(\{ f \le z \}) \in \Theta( z^\delta \log(z)^d )$ 
  where 
  \[
  \delta := \max\{ u + v \mid (u,v) \in D \}
  \]
  and $d$ is the dimension of the set where this maximum 
  is attained: either $d=0$ for a vertex, or $d=1$ for a segment.
  (See Figure \ref{fig:tropical} below for examples.)
\end{proposition}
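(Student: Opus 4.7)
The plan is to reduce the measure computation to an asymptotic integral over $D$ via a logarithmic change of coordinates, and then extract the order of growth by a Laplace-type analysis. First I would sandwich $\{f \le z\}$ between two sets of simple monomial-intersection form: for every $(x,y) \in \R_+^2$ one has $\max_{(i,j) \in K} c_{ij} x^i y^j \le f(x,y) \le N \cdot \max_{(i,j) \in K} c_{ij} x^i y^j$ with $N = |K|$, so that $\Omega_{z/N} \subset \{f \le z\} \subset \Omega_z$ where $\Omega_z := \bigcap_{(i,j) \in K} \{ c_{ij} x^i y^j \le z \}$. Since the claim is up to $\Theta$ and replacing $z$ by $z/N$ changes any quantity of the form $z^\delta (\log z)^d$ only by a bounded factor, the problem reduces to showing $\lambda(\Omega_z) \in \Theta(z^\delta (\log z)^d)$.

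I would then perform the substitution $x = z^u$, $y = z^v$ with $u,v \in \R$, whose Jacobian is $dx\,dy = (\log z)^2 z^{u+v}\,du\,dv$. Under it $\Omega_z$ corresponds to the polygon $D_z := \{(u,v) : iu + jv \le 1 - (\log c_{ij})/\log z \text{ for all } (i,j) \in K\}$, which converges to $D$ in Hausdorff distance at rate $O(1/\log z)$, and hence $\lambda(\Omega_z) = (\log z)^2 \iint_{D_z} z^{u+v}\,du\,dv$ differs from $(\log z)^2 \iint_D z^{u+v}\,du\,dv$ only by a bounded multiplicative factor, because $z^{O(1/\log z)} = e^{O(1)}$. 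Properness forces $(\alpha,0),(0,\beta) \in K$ with $\alpha,\beta \ge 1$, so $D \subset \{u \le 1/\alpha,\ v \le 1/\beta\}$ is full-dimensional and bounded in the $(1,1)$-direction, giving $\delta < \infty$. For the Laplace analysis of $I(z) := \iint_D z^{u+v}\,du\,dv$, I would parameterize $D$ near its maximizing face $F$ by the level $w = \delta - u - v \ge 0$ and a transverse coordinate, rewriting $I(z) = c\, z^\delta \int_0^\infty A(w)\, e^{-w \log z}\, dw$, where $A(w)$ is the $1$-dimensional length of $D \cap \{u+v = \delta - w\}$. If $F$ is a single vertex ($d=0$), then $A(w) \sim c' w$ as $w \to 0^+$, and Laplace gives $I(z) \sim C\, z^\delta / (\log z)^2$; if $F$ is an edge of length $L > 0$ ($d=1$), then $A(0) = L$ and $I(z) \sim CL\, z^\delta / \log z$. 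Multiplying back by the Jacobian factor $(\log z)^2$ yields $\lambda(\Omega_z) \in \Theta(z^\delta (\log z)^d)$ in both cases.

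The main obstacle will be making the Laplace-type asymptotic rigorous in the face of two complications: the integration region is the $z$-dependent polygon $D_z$ rather than the limit $D$, and $D$ is typically unbounded in the direction $(u,v) \to (-\infty,-\infty)$. The first point is controlled by the bound $z^{O(1/\log z)} = O(1)$, which gets absorbed into the implicit constants. The second is handled by noting that $A(w)$ grows at most linearly in $w$ (it is piecewise linear with finitely many slopes determined by the finitely many edges of $D$), while $e^{-w \log z}$ decays exponentially, so the contribution from $w$ bounded away from $0$ is lower order. A small additional check is that the maximizing face $F$ is a single connected face of dimension at most $1$: connectedness follows from convexity of $D$, and dimension $<2$ because the linear functional $u+v$ is non-constant on the two-dimensional polygon $D$.
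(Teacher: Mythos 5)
Your proposal is correct and follows essentially the same route as the paper's proof: a logarithmic (tropical) change of variables reducing the measure to $(\log z)^2\int_D z^{u+v}\,du\,dv$ up to bounded factors, followed by slicing along $u+v=\delta-t$ and integrating the piecewise-linear slice length against $z^{-t}$ to extract the exponent $\delta$ and the logarithmic power $d$. The only cosmetic differences are that you sandwich $f$ between $\max$ and $|K|\cdot\max$ of its monomials and carry the perturbed polygon $D_z$ through the computation, whereas the paper replaces $f$ by its tropicalization $\hat f$ first and integrates exactly over $D$.
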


The proof is a nice application of the so-called ``tropical'' approach.
The idea is to identify $\R_+ = \{ x \in \R \mid x \ge 0 \}$ 
and $\hat\R = \R \cup \{-\infty\}$ via the natural logarithm
$\log \colon \R_+ \to \hat\R$, and to formally replace 
the semiring $(\R_+,+,\cdot)$ by the semiring $(\hat\R,\max,+)$.
Of course, we have $\log( x \cdot y ) = \log x + \log y$ 
but for $\log(x+y)$ we only obtain an inequality,
\[
\max( \log x, \log y ) \le \log(x+y) \le \log 2 + \max( \log x, \log y ) .
\]
This means that $\log \colon \R_+ \to \hat\R$ is a \emph{quasi-isomorphism},
i.e.\ its failure to be an isomorphism is bounded by some constant.
For asymptotic arguments this is usually sufficient.

\begin{proof}[Proof of Proposition \ref{prop:TropicalExponent}]
  As a logarithmic analogue of $f$ we define 
  \[
  \hat{f} \colon \hat\R^2 \to \hat\R, \quad
  \hat{f}(\hat{x},\hat{y}) := \max_{(i,j) \in K}\left( i \hat{x} + j \hat{y} \right) .
  \]

  We can choose a constant $c \in \R_+$ such that $c_{ij} \ge e^{-c}$ 
  and $(\sharp K) \cdot c_{ij} \le e^{+c}$ for all $(i,j) \in K$.  
  A small calculation then shows that
  \[
  \left| \log f(x,y) - \hat{f}( \log x, \log y ) \right| \le c .
  \]

  The measure of the set $\{ f \le z \}$ equals the integral 
  over the associated indicator function $[ f \le z ]$ 
  and we can apply the change of variables 
  $\hat{x} = \log x$, $\hat{y} = \log y$, $\hat{z} = \log z$:
  \[
  F(z) := \int_{\R_+^2} \left[ f(x,y) \le z \right] \, dx \, dy 
  = \int_{\R^2} \left[ \log f( \hat{x}, \hat{y} ) \le \hat{z} \right] 
  \exp( \hat{x} + \hat{y} ) \, d\hat{x} \, d\hat{y} .
  \]

  It is easier to calculate this integral with $\hat{f}$ 
  instead of $f$, so let us do this first.
  Since $\hat{f}$ is homogeneous, we perform another change of variables
  $\hat{x} = u \hat{z}$ and $\hat{y} = v \hat{z}$ to obtain:
  \[
  \hat{F}(z) := \int_{\R^2} \left[ \hat{f}( \hat{x}, \hat{y} ) \le \hat{z} \right] 
  \exp( \hat{x} + \hat{y} ) \, d\hat{x} \, d\hat{y} 
  = \hat{z}^2 \int_{\R^2} \left[ \hat{f}( u, v ) \le 1 \right] 
  \exp( u\hat{z} + v\hat{z} ) \, du \, dv .
  \]
  We are now integrating over the convex polygon 
  $D := \{ (u,v) \in \R^2 \mid \hat{f}( u, v ) \le 1 \}$.
  The asymptotic behaviour of $\log \hat{F}(z)$ is easy to understand:
  for $z \to \infty$ we obtain
  \[
  \frac{\log \hat{F}(z)}{\log z} = \log\left[ \hat{z}^{2/\hat{z}} 
    \left( \int_D \exp( u + v )^{\hat{z}} \, du \, dv \right)^{1/\hat{z}} \right]
  \to \delta.
  \]
  For $z \to \infty$ the first factor $\hat{z}^{2/\hat{z}} \to 1$ plays no r\^ole.
  The remaining factor is the $\hat{z}$-norm $\| \exp( u + v ) \|_{\hat{z}}$
  and tends to the sup-norm $\| \exp( u + v ) \|_\infty = \exp(\delta)$ 
  for $\hat{z} \to \infty$.  

  This shows that $\log \hat{F}(z) \sim \delta \log z$,
  but does not yet suffice to imply $\hat{F}(z) \sim z^\delta$
  for $z \to \infty$.  We thus have a closer look at the quotient
  \[
  \frac{\hat{F}(z)}{z^\delta} = \log(z)^2 \int_D z^{(u+v-\delta)}  \, du \, dv .
  \]
  %
  We change variables $u = \frac{\delta-t}{2}-s$ and $v = \frac{\delta-t}{2} + s$
  so that $u+v = \delta-t$ and $du \, dv = ds \, dt$:
  \[
  \int_D z^{(u+v-\delta)} \, du \, dv = \int_0^\infty \ell(t) z^{-t} dt
  \]
  where $\ell(t)$ is the length of the segment $\{ (u,v) \in D \mid u+v=\delta-t \}$.
  Since $D$ is a polygon, there exist $a_0,a_1 \ge 0$ 
  and $T>0$ such that $\ell(t) = a_0 + a_1 t$ for all $t \in [0,T]$.
  We thus find 
  \[
  \int_0^\infty \ell(t) z^{-t} dt 
  \sim a_0 \log(z)^{-1} + a_1 \log(z)^{-2} \qquad\text{for $z \to \infty$.}
  \]
  Notice that $a_0 = 0$ if and only if 
  the maximum $u+v=\delta$ is attained in a single vertex.
  We thus obtain $\hat{F}(z) \in \Theta( z^\delta \log(z)^d )$
  where $\delta$ is the maximum of $u+v$ on $D$
  and $d$ is the dimension of the maximising set.
  Since $\hat{F}(e^{-c} z) \le F(z) \le \hat{F}(e^{+c} z)$,
  we conclude that $F(z) \in \Theta( z^\delta \log(z)^d )$.
\end{proof}


\begin{remark}
  It is clear that the proposition and its proof generalize
  to proper polynomial functions $f \colon \R_+^n \to \R_+$
  with non-negative coefficients, in any number $n$ of variables.
  We have concentrated on $n=2$, which is the case of interest to us here.
\end{remark}

\begin{example}
  For $f(x,y) = x^4 + y^5$ the set $\{ \hat{f} \le 1 \}$ 
  is depicted in Figure \ref{fig:tropical} on the left.
  Here we obtain $\delta = \frac{9}{20}$ and $d=0$
  because the maximum is attained in a single vertex.

  The figure in the middle shows $\{ \hat{f} \le 1 \}$ 
  for $f(a,b) = a^4 + a^3 b^3 + b^5$.  Here $\delta = \frac{1}{3}$
  and $d = 1$, because the maximum is attained on a segment,
  so that $n \in \Theta( z^{1/3} \log z )$.

  \begin{figure}[htbp]
    \includegraphics[scale=0.9]{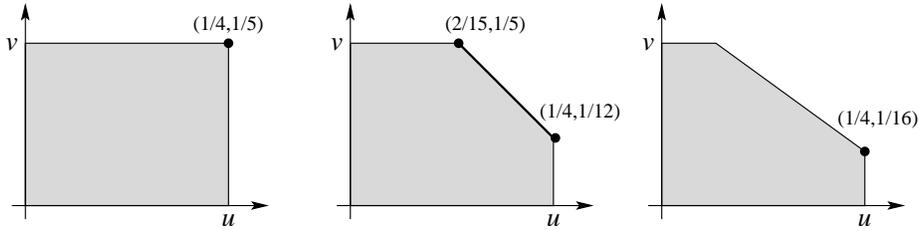}
    \caption{Maximizing $u+v$ under the constraint $\hat{f}(u,v) \le 1$}
    \label{fig:tropical}
  \end{figure} 
  
  The figure on the right shows $\{ \hat{f} \le 1 \}$ 
  for $f(a,b) = a^4 + a^3 b^4 + b^5$.
  Here we find $\delta = \frac{5}{16}$,
  which means that $z \in \Theta( n^{16/5})$.
  According to Proposition \ref{prop:CompareMemory},
  semimonotone enumeration requires memory 
  $m \in \Theta( z^{1/5} )$, whence
  $m \in \Theta( n^{16/25} )$.  

  Notice in particular that $\frac{16}{25} > \frac{1}{2}$.
  This illustrates that, unlike bimonotone enumeration, 
  semimonotone enumeration cannot guarantee the memory bound 
  $m \in O( \sqrt{n} )$. 
\end{example}

\begin{corollary}
  The semimonotone enumeration algorithm guarantees the memory bound $m \le n+1$, 
  and as a uniform bound the exponent $1$ is best possible:
  enumerating the values of $f(a,b) = a^\alpha + a^\alpha b^\beta + b^\beta$ 
  with $\alpha < \beta$ requires memory $m \in \Theta(n^{\alpha/\beta})$.
  \qed
\end{corollary}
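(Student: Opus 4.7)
The first assertion $m \le n+1$ is already contained in Proposition~\ref{prop:SemimonotoneInfinite}: Algorithm~\ref{algo:SesquimonotoneEnumeration} inserts one element at initialization and at most one additional element into the queue after each output, so after $n$ iterations the queue holds at most $n+1$ entries. I would simply cite this and move on.

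For the lower bound, my plan is to combine Propositions~\ref{prop:CompareMemory} and~\ref{prop:TropicalExponent} applied to the test function $f(a,b) = a^\alpha + a^\alpha b^\beta + b^\beta$ with $1 \le \alpha < \beta$. Reading off $f_1(a) = f(a,0) = a^\alpha$ and $f_2(b) = f(0,b) = b^\beta$, the axial degrees are $\alpha$ and $\beta$; since $\alpha \le \beta$, Proposition~\ref{prop:CompareMemory} immediately yields the semimonotone memory bound $m \in \Theta(z^{1/\beta})$ when the algorithm sorts by the variable of smaller axial degree.

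It remains to express $z$ in terms of the number $n$ of values enumerated so far, via Proposition~\ref{prop:TropicalExponent}. The exponent set is $K = \{(\alpha,0),(\alpha,\beta),(0,\beta)\}$, so the associated polygon $D$ is cut out by $\alpha u \le 1$, $\beta v \le 1$, and $\alpha u + \beta v \le 1$. I would check that the corner $(1/\alpha,1/\beta)$ is infeasible, since $\alpha(1/\alpha) + \beta(1/\beta) = 2 > 1$, so that the part of $\partial D$ relevant for maximising $u+v$ reduces to the single segment joining $(1/\alpha,0)$ and $(0,1/\beta)$ along $\alpha u + \beta v = 1$. Because $\alpha < \beta$ strictly, the linear functional $u+v$ is uniquely maximised on this segment at the endpoint $(1/\alpha,0)$, giving $\delta = 1/\alpha$ and $d = 0$. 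Proposition~\ref{prop:TropicalExponent} therefore delivers $n \in \Theta(z^{1/\alpha})$, equivalently $z \in \Theta(n^\alpha)$, and substituting into $m \in \Theta(z^{1/\beta})$ produces $m \in \Theta(n^{\alpha/\beta})$ as claimed.

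The only mildly delicate point is to make sure we land in the case $d=0$ rather than $d=1$: a one-dimensional maximising face would pollute the estimate with a $\log z$ factor and give the wrong asymptotic exponent. The strict inequality $\alpha < \beta$ is precisely what forces the maximum of $u+v$ on $D$ to sit at an isolated vertex, so the clean power law survives. Letting $\alpha/\beta \to 1$ (for instance $\alpha = k$, $\beta = k+1$ with $k \to \infty$) then demonstrates that no exponent strictly smaller than $1$ can serve as a uniform memory bound for semimonotone enumeration over all bimonotone polynomials, which is the content of ``the exponent $1$ is best possible''.
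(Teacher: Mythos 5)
Your proof is correct and follows essentially the same route as the paper: it combines Proposition~\ref{prop:CompareMemory} (giving $m \in \Theta(z^{1/\beta})$) with Proposition~\ref{prop:TropicalExponent} (giving $\delta = 1/\alpha$, hence $z \in \Theta(n^{\alpha})$), exactly as the paper's own three-line proof does. Your explicit check that the maximum of $u+v$ on $D$ is attained at the isolated vertex $(1/\alpha,0)$, so that $d=0$ and no logarithmic factor spoils the power law, is a detail the paper leaves implicit but is exactly right.
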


\begin{proof}
  According to Proposition \ref{prop:TropicalExponent}
  the number of enumerated values up to level $z$ is
  $n \in \Theta( z^\delta )$ with $\delta = 1/\alpha$,
  and thus $z \in \Theta( n^\alpha )$.
  According to Proposition \ref{prop:CompareMemory}
  the required memory is $m \in \Theta( z^{1/\beta} )$.
  We conclude that $m \in \Theta(n^{\alpha/\beta})$.
\end{proof}

\subsection{Constant factors}

Proposition \ref{prop:CompareMemory} exhibits many polynomial functions
where bimonotone enumeration is clearly worth the effort.
Depending on the envisaged application and the given function $f$, 
a finer analysis and a more modest conclusion may be necessary:

\begin{example}
  Consider polynomials of the form $f(a,b) = p(a) + q(b)$, for which 
  semimonotone enumeration was initially devised \cite{Ekl:1996,Bernstein:2001}.
  We obtain $n \in \Theta(z^\delta)$ with $\delta = \frac{1}{\alpha} + \frac{1}{\beta}$, 
  as already remarked in the proof of Proposition \ref{prop:TypicalExponents}.  
  Assuming $\alpha \le \beta$, bimonotone and semimonotone enumeration 
  both require memory $m \in \Theta(n^\varepsilon)$ with $\varepsilon = \frac{\alpha}{\alpha+\beta}$.
\end{example}

Even if memory requirements are of the same order of magnitude, 
we can usually expect to gain a constant factor with the bimonotone algorithm:

\begin{example} \label{exm:gamma}
  Reconsider $f \colon \N \times \N \to \Z$ defined by
  $f(a,b) = a^\gamma + b^\gamma$ with $\gamma > 1$.
  In this case semimonotone enumeration requires memory 
  $m \sim \sqrt[\gamma]{z}$, whereas bimonotone enumeration 
  requires memory $m \sim c_\gamma \cdot \sqrt[\gamma]{z}$
  with a factor $c_\gamma = 2 (1-\sqrt[\gamma]{1/2} ) < 1$.
\end{example}

\begin{proof}
  For $a = b = \lfloor \sqrt[\gamma]{z} \rfloor$ 
  we have $f_1(a) = f_2(b) \le z$ and $f_1(a+1) = f_2(b+1) > z$, 
  and semimonotone enumeration requires memory
  $\sharp \Min_{\sle} \{ f > z \} = 2 + b \sim \sqrt[\gamma]{z}$.

  Choosing $x = y = \lfloor \sqrt[\gamma]{z/2} \rfloor$ 
  we find $f(x,y) \le z$ and $f(x+1,y+1) > z$.  
  As indicated in Figure \ref{fig:convex}, 
  we have $m \sim (a-x) + (b-y) \sim c_\gamma \cdot \sqrt[\gamma]{z}$.
  %
\end{proof}

Though less impressive, for practical applications even 
a constant factor may be a welcome improvement: 
reducing memory consumption means that we can scale 
to considerably larger problems before running out of RAM.
In our example we have $c_2 \approx 0.59$, $c_3 \approx 0.41$,
$c_4 \approx 0.32$, $c_5 \approx 0.26$, 
and $c_\gamma \to 0$ for $\gamma \to \infty$.


\section{Parallelization} \label{sec:Parallelization}

Let us reconsider the application of sorted enumeration 
to a diophantine equation $f(a,b)=g(c,d)$, where 
$f, g \colon \N\times\N\to\Z$ are proper bimonotone functions.
Suppose we are looking for solutions $x=(a,b)$, $y=(c,d)$ 
with values in some large interval $z_{\min} \le f(x) = g(y) < z_{\max}$.
This problem can be split into $s$ independent subproblems, namely 
searching solutions with $z_{k-1} \le f(x) = g(y) < z_k$, 
where $z_{\min} = z_0 < z_1 < z_2 < \dots < z_s = z_{\max}$ 
is a subdivision of our search interval. 
This allows us to distribute the search 
on several computers in parallel.

\subsection{The initialization algorithm} \label{sub:Initialization}

To put the parallelization idea into practice, 
Algorithm \ref{algo:ConstructMinima}, stated below, 
initializes the enumeration stream to begin at level $z$.
Graphically speaking, it traces the contour of $X = \{ f \ge z \}$
in order to determine the set of minimal elements $M = \Min X$. 
From $M$ we can then immediately build up the priority queue $F = f(M)$.

As usual we require that $f\colon A\times B\to Z$ be a proper bimonotone map.  
For simplicity we first assume that both $A$ and $B$ are infinite.  
(We will treat the general case in the next paragraph.)
As before the successor function is denoted by 
$a \mapsto \succ{a}$ and  $b \mapsto \succ{b}$, respectively.
We also use the predecessor function, denoted by $b \mapsto \pred{b}$.

\begin{algorithm}[htbp]
  \caption{\quad Constructing the set of minima on $X = A \times B$}
  \label{algo:ConstructMinima}
  
  \begin{specification}
    \REQUIRE  a proper bimonotone function $f \colon A \times B \to Z$
    \INPUT    a level $z \in Z$
    \OUTPUT   the list of minima $\Min \{\, x \in X \mid f(x) \tge z \,\}
              = \List{(a_1,b_1),(a_2,b_2),\dots,(a_m,b_m)}$
    \ENSURE   $a_1 < a_2 < \dots < a_m$ and  $b_1 > b_2 > \dots > b_m$
  \end{specification}

  \smallskip\hrule
  
  \begin{algorithmic}[1]
    
    \STATE Initialize $M \gets \emptyset$ and $a \gets \amin$, $b \gets \bmin$
    \WHILEDO{ $f(a,b) < z$ }{ $b \gets \succ{b}$ }
    \STATE Insert $(a,b)$ at the end of the list $M$;
           continue with $a \gets \succ{a}$, $b \gets \pred{b}$

    \WHILE{ $b \tge \bmin$ }
      \WHILEDO{ $f(a,b) < z$ }{ $a \gets \succ{a}$ }
      \WHILEDO{ $b > \bmin$ and $f(a,\pred{b}) \ge z$ }{ $b\gets \pred{b}$ }
      \STATE Insert $(a,b)$ at the end of the list $M$;
             continue with $a \gets \succ{a}$, $b \gets \pred{b}$
    \ENDWHILE

    \RETURN $M$
    
  \end{algorithmic}
\end{algorithm}

The reader is invited to apply Algorithm \ref{algo:ConstructMinima} 
to the example given in Figure \ref{fig:minima2}, 
in order to see how it traces the contour of $X = \{ f \ge z \}$. 
By the way, the method applies to any set $X \subset A \times B$ 
that is saturated and has finite complement.
We shall give a detailed proof in the more general situation
of Algorithm \ref{algo:BimonotoneMinima} below. 

\begin{remark}
  The loop in line 2 determines
  $b \gets \min \{\, b\in B \mid f(\amin,b) \tge z \,\}$.
  This could, of course, be improved by replacing the linear search
  with a binary search, provided that $b$ can easily be incremented 
  and decremented by integer values. The same holds for the loops
  in lines 5 and 6. This optimization is straightforward 
  to implement whenever the application requires it.
\end{remark}

\begin{remark}
  Let $M = \List{(a_1,b_1),(a_2,b_2),\dots,(a_m,b_m)}$ 
  be the list of minima. Then building a priority queue 
  from $M$ requires time $O(m \log n)$.
  Moreover, let $k = \#[\amin,a_m]$ and $l = \#[\bmin,b_1]$, 
  with $k \ge l$, say.  Then $n \ge k \ge l \ge m$.
  Constructing the list $M$ itself requires time $O(k \log n)$
  using linear search, and $O(m \log^2 n)$ using binary search.
  We cannot expect to do much better, because constructing 
  a list of length $m$ requires at least $m$ iterations.
\end{remark}

\subsection{Applications} \label{sub:ParallelApplications}

Having initialized $M$ and $F$, we can apply 
the bimonotone enumeration algorithm to produce a sorted 
enumeration $x_1,x_2,\dots$ of the set $\{ f \ge z_{k-1} \}$.
Applying the same method to $g$, we can produce 
a sorted enumeration $y_1,y_2,\dots$ of $\{ g \ge z_{k-1} \}$.
We can thus search for solutions $f(x) = g(y)$ 
starting at level $z_{k-1}$ and ending at level $z_k$.

\smallskip\textit{Expected speed-up.}
Concerning time requirements, initialization entails a reasonably small 
overhead, so we can expect an amortized speed-up by a factor $s$.
For each $k=1,\dots,s$, computer number $k$ manages 
its own priority queues of length $O( \sqrt{n} )$.
in order to produce enumeration streams for $f$ and $g$,
with values ranging from $z_{k-1}$ to $z_k$.
As before, advancing from position $n$ to position $n+1$ 
takes time $O(\log^2 n)$. 

\smallskip\textit{Robustness.}
The initialization procedure is already very useful on a 
single computer, since it can make implementations much more robust: 
it is possible to continue searching, without much loss, 
after a shut-down or a power failure.
This is particularly important when carrying out a long-term search.


\section{Enumerating bimonotone domains} \label{sec:BimonotoneDomains}

Suppose we want to enumerate the values of a \emph{symmetric} 
bimonotone function $f \colon \N \times \N \to \Z$, that is, 
$f(a,b) = f(b,a)$ for all $a,b \in \N$. 
It is often desirable to enumerate only pairs $(a,b)$ with $a \tge b$.
In other words, we wish to \emph{restrict} $f$ to the domain 
$X = \{\, (a,b) \in \N \times \N \mid a \tge b \,\}$ and 
enumerate only the values of $f \colon X \to \Z$.
Figure \ref{fig:minima3} shows a possible 
configuration during bimonotone enumeration.

\begin{figure}[htbp]
  \includegraphics[height=50mm]{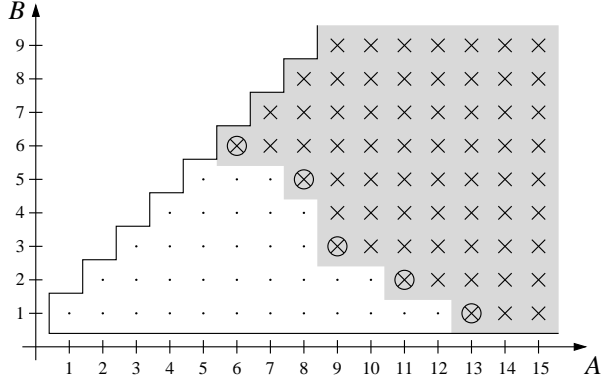}
  \caption{Enumerating the domain
    $X = \{\, (a,b) \in \N \times \N \mid a \tge b \,\}$}
  \label{fig:minima3}
\end{figure} 

It is straightforward to adapt bimonotone
enumeration (Algorithm \ref{algo:BimonotoneEnumeration})
and initialization (Algorithm \ref{algo:ConstructMinima}) 
to such a domain $X$. Still other restrictions are possible, for example 
$X = \{\, (a,b) \in \N\times\N \mid a \le b \text{ and } b \le 2a \,\}$
or more complicated cases such as 
$X = \{\, (a,b) \in \N\times\N \mid a \le b \text{ and } b^2 \le 1+10a \,\}$.
This raises the question as to what are ``reasonable'' 
domains $X \subset A \times B$ to which 
Algorithms \ref{algo:BimonotoneEnumeration} 
and \ref{algo:ConstructMinima} can be efficiently applied.

\subsection{Bimonotone domains} \label{sub:BimonotoneDomains}

As usual we assume that $A$ and $B$ are isotonic
to finite intervals or to the natural numbers.
Figure \ref{fig:minima4} shows a domain $X \subset A \times B$
which will turn out to be well suited to bimonotone enumeration.
Graphically speaking, it is bounded by the graphs of two 
non-decreasing functions $\alpha \colon A \to B$ and 
$\beta \colon B \to A$. We will show that this condition 
suffices to adapt our algorithms to work on the domain $X$
rather than the entire product $A \times B$.

\begin{figure}[htbp]
  \includegraphics[height=50mm]{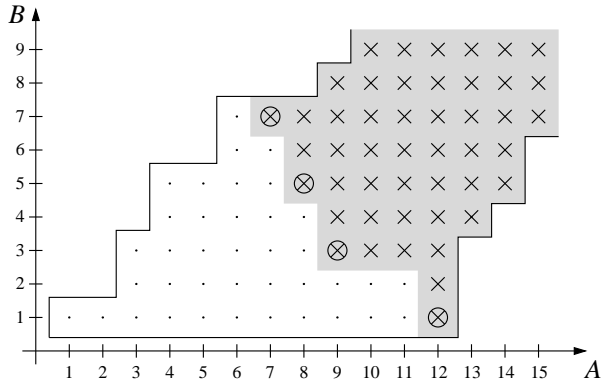}
  \caption{Enumerating a bimonotone domain $X \subset A \times B$}
  \label{fig:minima4}
\end{figure} 

We say that $X \subset A \times B$ is \emph{bounded} 
by functions $\alpha \colon A \to B$ and $\beta \colon B \to A$ if 
\[
X = \{\, (a,b) \in A \times B \mid a \tge \beta(b) \text{ and } b \tge \alpha(a) \,\}.
\]
For example, the domain $X$ of Figure \ref{fig:minima4} 
is bounded by $\alpha(1) = \dots = \alpha(12) = 1$,
$\alpha(13) = 4$, $\alpha(14) = 5$, $\alpha(15) = 7$, 
and $\beta(1) = 1$, $\beta(2) = \beta(3) = 3$, \dots, $\beta(9) = 10$.

\begin{definition}
  We say that a domain $X \subset A \times B$ is \emph{bimonotone}
  if it is bounded by two functions $\alpha \colon A \to B$ and
  $\beta \colon B \to A$ such that:
  \begin{enumerate}
  \item[(1)]
    The functions $\alpha$ and $\beta$ are non-decreasing, that is, \\
    $a \tle a'$ implies $\alpha(a) \tle \alpha(a')$, and 
    $b \tle b'$ implies $\beta(b) \tle \beta(b')$.
  \item[(2)] 
    We have $\beta(\alpha(a)) \tle a$ for all $a \in A$, 
    with equality only for $a = \amin$, \\
    and $\alpha(\beta(b)) \tle b$ for all $b \in B$, 
    with equality only for $b = \bmin$.
  \end{enumerate}
\end{definition}

Condition (2) ensures that $(a,\alpha(a)) \in X$ for each $a \in A$,
and $(\beta(b),b) \in X$ for each $b \in B$. In particular, 
$\alpha$ and $\beta$ are determined by $X$ via
\begin{align*}
  \alpha(a) &= \min \{\, b \in B \mid (a,b) \in X \,\} , \\
  \beta(b)  &= \min \{\, a \in A \mid (a,b) \in X \,\} .
\end{align*}
Moreover, $(\amin,\bmin)$ is the smallest element of $X$.
If both $A$ and $B$ are finite, then $(\amax,\bmax)$ is 
the greatest element of $X$. 

The definition of $X$ via bounding functions is easy 
to formulate and well suited to implementation.
It can also be reformulated in more geometric terms:

\begin{proposition}
  A domain $X \subset A \times B$ is bimonotone if and only if it satisfies 
  $\pr_1 X = A$ and $\pr_2 X = B$ and the following two properties:
  \begin{enumerate}
  \item[(1')]
    If $(a_1,b_1)$ and $(a_2,b_2)$ in $X$ 
    satisfy $a_1 \tle a_2$ and $b_2 \tle b_1$, 
    then $X$ contains the entire rectangle 
    $\{\, (a,b) \in A \times B \mid a_1 \tle a \tle a_2
    \text{ and } b_2 \tle b \tle b_1 \,\}$.
  \item[(2')]
    If $(a_1,b_1)$ and $(a_2,b_2)$ in $X$ 
    satisfy $a_1 \tle a_2$ and $b_1 \tle b_2$, 
    then we can go from $(a_1,b_2)$ to $(a_2,b_2)$ 
    within $X$ by repeatedly incrementing $a$ and $b$.
    \qed
  \end{enumerate}
\end{proposition}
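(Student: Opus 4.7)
The plan is to establish the two implications separately, using in the reverse direction the descriptions $\alpha(a) = \min\{\, b \in B \mid (a,b) \in X \,\}$ and $\beta(b) = \min\{\, a \in A \mid (a,b) \in X \,\}$ of the bounding functions.

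For the forward direction, assume $X$ is bimonotone with bounding functions $\alpha, \beta$ satisfying (1) and (2). Condition (2) supplies $(a, \alpha(a)), (\beta(b), b) \in X$, yielding $\pr_1 X = A$ and $\pr_2 X = B$. Property (1') reduces to a direct monotonicity check: for $(a, b)$ in the rectangle bracketed by $(a_1, b_1)$ and $(a_2, b_2)$ one has $\alpha(a) \tle \alpha(a_2) \tle b_2 \tle b$ and $\beta(b) \tle \beta(b_1) \tle a_1 \tle a$, so $(a, b) \in X$. For (2') I would construct an explicit monotone lattice path inside $X$ from $(a_1, b_1)$ to $(a_2, b_2)$ by tracing the lower-left staircase boundary of $X$: starting at $(a_1, b_1)$, increment $a$ as long as $\alpha(a) \tle b$ still holds, then increment $b$ as long as $\beta(b) \tle a$ still holds, and iterate. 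The strict-inequality clauses in condition (2) are exactly what guarantee that this procedure always makes progress and ultimately reaches $(a_2, b_2)$.

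For the reverse direction, define $\alpha$ and $\beta$ as the coordinate-wise minima, which are well defined thanks to the surjectivity of the projections. Monotonicity of $\alpha$ follows from (1'): if $a_1 \tle a_2$ were to satisfy $\alpha(a_2) < \alpha(a_1)$, then applying (1') to $(a_1, \alpha(a_1))$ and $(a_2, \alpha(a_2))$ would place $(a_1, \alpha(a_2))$ into $X$, contradicting the minimality defining $\alpha(a_1)$; the argument for $\beta$ is symmetric. The inclusion $X \subseteq \{\, (a,b) \mid a \tge \beta(b),\, b \tge \alpha(a) \,\}$ is tautological, and the reverse inclusion follows by applying (1') to the axis points $(\beta(b), b)$ and $(a, \alpha(a))$, whose coordinates bracket $(a, b)$ in the required sense.

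The step I expect to demand the most care is the strict clause of condition (2). The weak inequality $\beta(\alpha(a)) \tle a$ is immediate from $(a, \alpha(a)) \in X$. Suppose $\beta(\alpha(a)) = a$ for some $a > \amin$, and let $a_0$ denote the predecessor of $a$. Monotonicity forces $\alpha(a_0) \tle \alpha(a)$, and equality is ruled out since it would give $\beta(\alpha(a)) \tle a_0 < a$. Property (2') now produces a monotone lattice path in $X$ from $(a_0, \alpha(a_0))$ to $(a, \alpha(a))$. Exactly one step on this path transitions the first coordinate from $a_0$ to $a$, so the point immediately before is $(a_0, b^*)$ and immediately after is $(a, b^*)$, with $\alpha(a_0) \tle b^* \tle \alpha(a)$ because the $b$-coordinate is non-decreasing along the path and terminates at $\alpha(a)$. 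But $(a, b^*) \in X$ forces $b^* \tge \alpha(a)$, so $b^* = \alpha(a)$ and hence $(a_0, \alpha(a)) \in X$, yielding $\beta(\alpha(a)) \tle a_0 < a$, a contradiction. The symmetric clause for $\alpha \circ \beta$ is handled dually. The main obstacle is this pincer bound on the jump height $b^*$; everything else is routine monotonicity bookkeeping.
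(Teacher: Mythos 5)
The paper itself omits the proof of this proposition (``The proof is not difficult and will be omitted''), so there is no authorial argument to compare against; judged on its own, your proof is correct, and the part you rightly single out as delicate — deriving the strict clause of condition (2) from (2') — is handled properly: the unique $a$-incrementing step of the lattice path gives the pincer $\alpha(a_0)\tle b^*\tle\alpha(a)$, which together with $b^*\tge\alpha(a)$ (forced by $(a,b^*)\in X$) yields $(a_0,\alpha(a))\in X$ and the desired contradiction. The reverse direction is otherwise routine, exactly as you say. The one place you should write out in full is the progress claim in the forward direction of (2'), which is currently only asserted: if $(a,b)\in X$ with $a<a_2$ and $b<b_2$ but neither $(\succ{a},b)$ nor $(a,\succ{b})$ lies in $X$, then $\alpha(\succ{a})\tge\succ{b}$ and $\beta(\succ{b})\tge\succ{a}$, whence $\beta(\alpha(\succ{a}))\tge\beta(\succ{b})\tge\succ{a}$ by monotonicity of $\beta$, contradicting condition (2) since $\succ{a}>\amin$; the boundary cases $a=a_2$ or $b=b_2$ follow directly from monotonicity of $\alpha$ and $\beta$ together with $(a_2,b_2)\in X$, and termination holds because each step strictly increases $a+b$ within the finite rectangle. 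Your phrase about ``the strict-inequality clauses'' identifies the right ingredient, but this chain is the only point where condition (2) actually enters the forward direction, so it deserves to be displayed rather than waved at; with that inserted, the proof is complete.
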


The proof is not difficult and will be omitted.

\subsection{Bimonotone enumeration} \label{sub:BimonotoneDomainEnumeration}

We are now in position to generalize our enumeration algorithm to 
a bimonotone domain.  As before, Algorithm \ref{algo:EnumerateBimonotoneDomain}
processes a bidirectional list $M$ and a priority queue $F$.

\begin{algorithm}[htbp]
  \caption{\quad Sorted enumeration of a bimonotone domain}
  \label{algo:EnumerateBimonotoneDomain}
  
  \begin{specification}
    \REQUIRE  a bimonotone domain $X \subset A \times B$
              and a proper bimonotone function $f \colon X \to Z$ 
    \OUTPUT   an enumeration of $X$, monotone with respect to $f$
  \end{specification}
  
  \smallskip\hrule
  
  \begin{algorithmic}[1]
    \STATE Initialize $M \gets \List{ (\amin,\bmin) }$
           and $F \gets \Queue{ f(\amin,\bmin) }$.
    \WHILE{$F$ is non-empty}
      \STATE Remove a minimal element $f(a,b)$ from $F$ and output $(a,b)$.
      \IF{ $(a,b)$ is the last element of the list $M$ }
        \IFTHEN{ $(\succ{a},b) \in X$ }{ insert $(\succ{a},b)$ into $M$ and $f(\succ{a},b)$ into $F$ }
      \ELSE
        \STATE Let $(a^*,b_*)$ be the successor of $(a,b)$ in the list $M$.
        \IFTHEN{ $\succ{a}<a^*$ }{ insert $(\succ{a},b)$ into $M$ and $f(\succ{a},b)$ into $F$ }
      \ENDIF
      \IF{ $(a,b)$ is the first element of the list $M$ }
        \IFTHEN{ $(a,\succ{b}) \in X$ }{ insert $(a,\succ{b})$ into $M$ and $f(a,\succ{b})$ into $F$ }
      \ELSE
        \STATE Let $(a_*,b^*)$ be the predecessor of $(a,b)$ in the list $M$.
        \IFTHEN{ $\succ{b}<b^*$ }{ insert $(a,\succ{b})$ into $M$ and $f(a,\succ{b})$ into $F$ }
      \ENDIF
      \STATE Remove $(a,b)$ from the list $M$.
    \ENDWHILE
  \end{algorithmic}
\end{algorithm}

\begin{proposition}
  Algorithm \ref{algo:EnumerateBimonotoneDomain} is correct.
\end{proposition}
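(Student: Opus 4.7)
The plan is to mimic the inductive correctness argument given for Algorithm~\ref{algo:BimonotoneEnumeration}, adapted to account for the restricted domain $X$. At the start of the $i$-th iteration I would maintain the invariants that $X_i \subset X$ is the set of not-yet-output parameters, that $X_i = M_i^\bupper \cap X$ is saturated, that $M_i = \Min_\ble(X_i)$, that $F_i = f(M_i)$, and that the list $M_i = \List{(a_1,b_1),\dots,(a_m,b_m)}$ is ordered so that $a_1<\dots<a_m$ and $b_1>\dots>b_m$. Condition~(2) in the definition of a bimonotone domain forces $(\amin,\bmin)$ to be the unique $\ble$-minimum of $X$, which establishes the invariants at initialization.

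For the inductive step I would argue exactly as before that bimonotonicity of $f$ gives $\min f(X_i) = \min f(M_i)$, so the choice of $(a,b)$ is correct, and that the only candidates for new minima of $X_{i+1} := X_i \minus \{(a,b)\}$ are $(\succ{a},b)$ and $(a,\succ{b})$, each admitted only if it lies in $X$. The heart of the proof is to verify that the conditional tests of Algorithm~\ref{algo:EnumerateBimonotoneDomain} accept each candidate precisely when it actually belongs to $\Min_\ble(X_{i+1})$. For an interior element $(a,b)$ of the list, where both the successor $(a^*,b_*)$ and predecessor $(a_*,b^*)$ exist in $M_i$, only the comparison $\succ{a}<a^*$ (respectively $\succ{b}<b^*$) is needed. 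At the boundary, where $(a,b)$ is the first or last element of $M_i$, the algorithm falls back on the explicit membership test $(\succ{a},b) \in X$ or $(a,\succ{b}) \in X$, which also subsumes the existence check $\succ{a}<+\infty$ or $\succ{b}<+\infty$. A routine verification then shows that $M_{i+1}$ remains ordered and equals $\Min_\ble(X_{i+1})$, and that $F_{i+1} = f(M_{i+1})$.

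The delicate step, which I expect to be the main obstacle, is justifying the interior test: I must show that whenever the algorithm sees $\succ{a}<a^*$, the candidate $(\succ{a},b)$ automatically lies in $X$, so that no separate membership check is needed. The plan is to invoke property~(1') of bimonotone domains, applied to the pair $(a,b),(a^*,b_*) \in X$ which satisfies $a < a^*$ and $b_* < b$: this forces the entire rectangle $\{(a',b') \mid a \tle a' \tle a^*,\ b_* \tle b' \tle b\}$ to lie in $X$, and in particular $(\succ{a},b) \in X$. A symmetric argument with the predecessor handles $(a,\succ{b})$. This is precisely where the non-decrease of the bounding functions $\alpha$ and $\beta$, encoded in condition~(1) of the definition, earns its keep; dropping that hypothesis would invalidate the interior test and force a costly membership check at every iteration.

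Termination and exhaustiveness in the case where $A$ or $B$ is infinite would then follow from properness of $f$, exactly as in Theorem~\ref{thm:AsymptoticBounds}: for every $z \in \im(f)$ the set $\{\,(a,b) \in X \mid f(a,b) \tle z\,\}$ is finite, hence completely output within finitely many iterations, so the resulting stream enumerates all of $X$ in the order required.
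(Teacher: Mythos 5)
Your proposal is correct and follows essentially the same route as the paper: the same inductive invariants as for Algorithm~\ref{algo:BimonotoneEnumeration}, combined with a case analysis on whether $(a,b)$ is interior to the list $M$ or at one of its ends, and termination/exhaustiveness via properness of $f$. If anything, you are more explicit than the paper on the one delicate point — the paper merely asserts that for interior elements ``the previous arguments apply without change,'' whereas you correctly supply the justification by applying the rectangle property~(1') to $(a,b)$ and its list neighbour to conclude that $(\succ{a},b)$ (resp.\ $(a,\succ{b})$) automatically lies in $X$, so no membership test is needed there.
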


\begin{proof}
  The proof is essentially the same as 
  for Algorithm \ref{algo:BimonotoneEnumeration}.
  There are, however, some modifications when
  updating the list $M$ and the priority queue $F$:
  \begin{itemize}
  \item
    If the current minimum $(a,b)$ is somewhere in the middle 
    of the list $M$, then the previous arguments apply without change, 
    because we still have
    \[
    \{ (a,b) \}^\bupper \minus \{ (a,b) \} = \{\, (a,\succ{b}), (\succ{a},b) \,\}^\bupper.
    \]
  \item
    If $(a,b)$ is at the end of the list, then possibly 
    $(\succ{a},b) \notin X$: in this case we have
    $\{ (a,b) \}^\bupper \minus \{ (a,b) \} = \{ (a,\succ{b}) \}^\bupper$,
    so we discard $(\succ{a},b)$. 
  \item
    If $(a,b)$ is at the beginning of the list, then possibly
    $(a,\succ{b}) \notin X$: in this case we have 
    $\{ (a,b) \}^\bupper \minus \{ (a,b) \} = \{ (\succ{a},b) \}^\bupper$,
    so we discard $(a,\succ{b})$.
  \item
    If ever $M = \List{(a,b)}$ and neither $(a,\succ{b})$ nor $(\succ{a},b)$ 
    is in $X$, then $(a,b)$ is the greatest element of $X$ 
    and the algorithm terminates correctly.
  \end{itemize}
  Since $f$ is proper, every element $(a,b) \in X$ will eventually be enumerated.
\end{proof}

\subsection{Initialization on a bimonotone domain} \label{sub:BimonotoneDomainInitialization}

As for the unrestricted case $X = A \times B$, 
we want to formulate an initialization algorithm 
for a proper bimonotone function $f \colon X \to Z$ 
defined on some bimonotone domain $X \subset A \times B$.
The idea is essentially the same: Algorithm \ref{algo:BimonotoneMinima}
traces the contour of $X(z) = \{\, x \in X \mid f(x) \tge z \,\}$ 
to construct the list $M = \Min X(z)$ of its minimal elements.

\begin{algorithm}[htbp]
  \caption{\quad Constructing the set of minima on a bimonotone domain}
  \label{algo:BimonotoneMinima}
  
  \begin{specification}
    \REQUIRE  a bimonotone domain $X \subset A \times B$
              and a proper bimonotone function $f\colon X \to Z$
    \INPUT    a level $z\in Z$        
    \OUTPUT   the list of minima $\Min \{\, x\in X \mid f(x) \tge z \,\} 
              = \List{(a_1,b_1),(a_2,b_2),\dots,(a_m,b_m)}$
    \ENSURE   $a_1 < a_2 < \dots < a_m$ and  $b_1 > b_2 > \dots > b_m$
  \end{specification}

  \smallskip\hrule
  
  \begin{algorithmic}[1]
    
    \STATE Initialize $M \gets \emptyset$ and $a \gets \amin$, $b \gets \bmin$

    \WHILE{ $(a,b) \in X$ and $f(a,b) < z$ }
      \WHILEDO{ $f(a,b) < z$ and $(a,\succ{b}) \in X$ }{ $b \gets \succ{b}$ }
      \IFTHEN{ $f(a,b) < z$ }{ $a \gets \succ{a}$ }
    \ENDWHILE

    \WHILE{ $(a,b) \in X$ }
      \WHILEDO{ $(a,\pred{b}) \in X$ and $f(a,\pred{b}) \tge z$ }{ $b \gets \pred{b}$ }
      \STATE Insert $(a,b)$ at the end of the list $M$; 
             continue with $a \gets \succ{a}$, $b \gets \pred{b}$
      \WHILEDO{ $(a,b) \in X$ and $f(a,b) < z$ }{ $a \gets \succ{a}$ }
    \ENDWHILE

    \RETURN $M$

  \end{algorithmic}
\end{algorithm}

\begin{remark}
  The loops in lines 3, 7, and 9 implement linear searches.
  This can be improved by a binary search whenever
  the application requires such optimization.
\end{remark}

\begin{proposition}
  Algorithm \ref{algo:BimonotoneMinima} is correct.
\end{proposition}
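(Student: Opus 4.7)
The plan is to verify termination, then establish a loop invariant for each of the two while blocks that together imply correctness. Since $f$ is proper, the set $\{x \in X \mid f(x) < z\}$ is finite, so $\Min X(z)$ is also finite; I would enumerate its elements as $(a_1,b_1),\dots,(a_m,b_m)$ with $a_1<\cdots<a_m$, whence minimality with respect to $\ble$ forces $b_1>\cdots>b_m$. Finiteness of $\{f<z\}\cap X$ also ensures that every inner loop (lines 3, 7, and 9) terminates.

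For the first while block (lines 2--5), I would argue that starting from $(\amin,\bmin)$ the loop traces monotonically along points of $\{f<z\}\cap X$, raising $b$ as far as possible within $X$ before raising $a$. The crucial claim to verify is that upon exit either $(a,b)\notin X$, which correctly diagnoses $\Min X(z)=\emptyset$, or else $(a,b)=(a_1,b_1)$. This follows because the traced point has the smallest $a$-coordinate of any point in $X(z)$ (no $a$-value carrying a point of $X(z)$ is ever skipped by the increment on line 4), and for that $a$-value it carries the smallest admissible $b$.

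For the second while block (lines 6--10), I would carry the invariant that at the start of the $k$-th iteration, $M$ holds $\List{(a_1,b_1),\dots,(a_{k-1},b_{k-1})}$ and $(a,b)\in X$ satisfies $a=a_k$ with $b\tge b_k$ and $f(a,b)\tge z$. Line 7 decrements $b$ within $X(z)$ and lands at $b_k$; line 8 inserts $(a_k,b_k)$ and advances to $(\succ{a_k},\pred{b_k})$; line 9 increments $a$ within $\{f<z\}\cap X$ to reach $(a_{k+1},b_{k+1})$ when $k<m$, or to leave $X$ when $k=m$. The main obstacle is justifying that line 7 stops exactly at $b_k$: the upper bound $b\tge b_k$ is supplied by the invariant, while the lower bound requires $(a_k,\pred{b_k})\notin X(z)$, which follows from minimality of $(a_k,b_k)$ in $X(z)$, since otherwise $(a_k,\pred{b_k})\ble(a_k,b_k)$ would contradict minimality. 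A symmetric argument handles line 9, using that $(a_{k+1},b_{k+1})$ is a minimum. With the invariant verified, the outer loop terminates after exactly $m$ iterations and outputs $M=\Min X(z)$ in the order prescribed by the specification.
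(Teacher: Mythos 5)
Your overall strategy---a loop invariant asserting that after $k-1$ passes through the second block the list $M$ holds $(a_1,b_1),\dots,(a_{k-1},b_{k-1})$ while the current point lies in column $a_k$ at height $\tge b_k$---is the same contour-tracing argument as the paper's, and your analysis of lines 7--9 (descend to $b_k$ by minimality of $(a_k,b_k)$, then advance to column $a_{k+1}$) is essentially sound. However, your ``crucial claim'' that the first block exits at $(a_1,b_1)$ is false: since $b$ is never reset when $a$ is incremented, the first block exits at $(a_1,b)$ where $b$ may strictly exceed $b_1$. Take $X=\{(a,b)\mid a\tge b\}$, $f(a,b)=10a+b$, $z=61$: the trace tops out each column along the diagonal, leaves column $5$ at $(5,5)$, and stops at $(6,5)$, whereas $(a_1,b_1)=(6,1)$. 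Your proof survives only because the invariant's base case needs just $a=a_1$ and $b\tge b_1$, which your ``no column is skipped'' argument does deliver, and because line 7 then performs the descent to $b_1$; but the claim as stated, and the assertion that the first block reaches ``the smallest admissible $b$,'' are wrong.

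The more substantive gap is that you assert, without proof, that leaving $X$ in the first block ``correctly diagnoses $\Min X(z)=\emptyset$'' and that line 9 ``leaves $X$ when $k=m$.'' These are precisely the places where the hypotheses defining a bimonotone domain must be invoked. A priori the step $a\gets\succ{a}$ could land at $(\succ{a},b)\notin X$ because $b<\alpha(\succ{a})$, even though column $\succ{a}$ and later columns still contain unexamined points of $X(z)$; ruling this out requires that $\alpha$ and $\beta$ are non-decreasing with $\beta(\alpha(a))\tle a$ (equivalently, the rectangle condition (1')). The paper's proof does exactly this: it argues that the first loop can only exit $X$ upon reaching the greatest element of $X$, so that $f<z$ everywhere, and that line 9 can only exit $X$ once $X(z)=M^\bupper$. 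Without some such argument your proof covers only $X=A\times B$, whereas the point of Algorithm \ref{algo:BimonotoneMinima} is to handle restricted domains.
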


\begin{proof}
  The first loop (lines 1--5) finds an element $(a,b) \in X(z)$ 
  with minimal $a$. 
  Beginning with $a \gets \amin$ and $b \gets \bmin$, 
  we repeatedly increment $b$ in order to arrive at $f(a,b) \tge z$.
  If this is not possible within $X$, then the candidate $a$ 
  is eliminated, and we continue with $a \gets \succ{a}$.
  If we never run out of the domain $X$, then we finally
  end up with $f(a,b) \tge z$, because $a$ or $b$ 
  increase and $f$ is proper.
  
  The only obstacle occurs when $f(a,b) < z$ but 
  neither $(a,\succ{b})$ nor $(\succ{a},b)$ are in $X$.
  But in this case we have reached the greatest element of $X$,
  hence $f(x) < z$ for all $x \in X$. Thus $X(z) = \emptyset$ 
  and we correctly return the empty list $M = \emptyset$.
  In any case, the first loop terminates with either 
  $(a,b) \notin X$ or $f(a,b) \tge z$, as desired.
  
  When arriving at line 7 we know that $(a,b) \in X(z) \minus M^\bupper$, and 
  $a$ is minimal with this property. The loop in line 7 minimizes $b$, 
  so we know that $(a,b)$ is a minimal element of $X(z)$.
  We thus add $(a,b)$ to our list $M$ 
  and continue with $a \gets \succ{a}$ and $b \gets \pred{b}$.
  We then repeatedly increment $a$ in order to arrive at $f(a,b) \tge z$.
  If this is not possible in $X$, then $X(z) = M^\bupper$ by the
  rectangle condition (1'), so we have found all minimal elements of $X(z)$. 
  Otherwise, we obtain $(a,b) \in X(z) \minus M^\bupper$, 
  and $a$ is minimal with this property. We can thus 
  reiterate by looping back to line 7.
  
  During each iteration, $a$ is strictly increasing 
  while $b$ is strictly decreasing. We conclude that
  the second loop terminates and produces the list $M$
  of minima, as desired, ordered in the sense that 
  $a_1 < a_2 < \dots < a_m$ and  $b_1 > b_2 > \dots > b_m$.
\end{proof}


\section{Applications to diophantine enumeration} \label{sec:Applications}

Algorithms \ref{algo:EnumerateBimonotoneDomain} and 
\ref{algo:BimonotoneMinima} for bimonotone enumeration 
have been implemented as a class template in C++.
This seems to be a good compromise between general 
applicability, ease of use, and high performance. 
The source files are available on the author's homepage:

\centerline{\texttt{http://www-fourier.ujf-grenoble.fr/$\sim$eiserm/software}}

As an illustration of sorted enumeration, let us mention searching multiple values 
of a polynomial function $f\colon\N\times\N\to\Z$, 
$f(a,b) = \sum_{i,j} c_{ij} a^i b^j$ with non-negative coefficients $c_{ij} \in \N$. 
The cited implementation has been successfully tested to reproduce some known results 
taken from Richard Guy's \textit{Unsolved problems in number theory} \cite{Guy:2004}.

\subsection{The quest for the sixth taxicab number} \label{sub:TaxiQuest}

As an illustrative example we briefly sketch the taxicab problem.
The $k$th taxicab number, denoted by $\taxicab{k}$,
is the least positive integer that can be expressed 
as a sum of two positive cubes in $k$ distinct ways, 
up to order of summands. That is, it is the smallest
$k$-fold value of $f(a,b)=a^3+b^3$ defined on
$X = \{\, (a,b) \in \N \times \N \mid 1 \le a \le b \,\}$. 

G.\,H.\,Hardy and E.\,M.\,Wright \cite[Thm.\,412]{HardyWright:1954}
proved that, for every $k \ge 1$, there exist such $k$-fold values.
This guarantees the existence of a least $k$-fold value, 
that is, the $k$th taxicab number.
Unfortunately the construction given in the proof
is of no help in finding the \emph{least} $k$-fold value.
Apart from (variants of) exhaustive search,
no such method is known today.
The first taxicab number is trivially 
\[
\taxicab{1} = 2 = 1^3 + 1^3.
\]
The next taxicab numbers are:
\begin{align*}
  \taxicab{2} 
  &= 1729 = 1^3 + 12^3 = 9^3 + 10^3 ,
\end{align*}
(re)discovered by Ramanujan according to Hardy's famous anecdote, 
but previously published by Bernard Fr\'enicle de Bessy in 1657,
\begin{align*}
  \taxicab{3} 
  &= 87\,539\,319 \\
  &= 167^3 + 436^3
  = 228^3 + 423^3
  = 255^3 + 414^3 ,
\end{align*}
discovered by John Leech \cite{Leech:1957} in 1957,
\begin{align*}
  \taxicab{4} 
  &= 6\,963\,472\,309\,248
  = 2421^3 + 19083^3 
  = 5436^3 + 18948^3 \\
  &= 10200^3 + 18072^3
  = 13322^3 + 16630^3 ,
\end{align*}
discovered by E.\,Rosenstiel, J.A.\,Dardis, and C.R.\,Rosenstiel
\cite{Rosenstiel:1991} in 1991,
\begin{align*}
  \taxicab{5} 
  &= 48\,988\,659\,276\,962\,496
  = 38787^3 + 365757^3
  = 107839^3 + 362753^3 \\
  &= 205292^3 + 342952^3
  = 221424^3 + 336588^3
  = 231518^3 + 331954^3 ,
\end{align*}
discovered independently by D.W.\,Wilson \cite{Wilson:1999} in 1997 
and shortly afterwards by D.J.\,Bernstein \cite{Bernstein:2001} in 1998.
Finally, the smallest known $6$-fold value is 
\begin{align*}
  T &= 24\,153\,319\,581\,254\,312\,065\,344 \\
  &= 28906206^3 + 582162^3
  = 28894803^3 + 3064173^3
  = 28657487^3 + 8519281^3 \\
  &= 27093208^3 + 16218068^3
  = 26590452^3 + 17492496^3
  = 26224366^3 + 18289922^3 ,
\end{align*}
found by R.L.\,Rathbun in 2002. 
Is this actually the sixth taxicab number, 
or is there a smaller solution?

\subsection{Feasibility of an exhaustive search} \label{sub:TaxiFeasability}


In order to verify that $T$ is indeed the smallest $6$-fold value,
there are exactly $n = 369\,039\,037\,733\,393 < 4 \cdot 10^{14}$ pairs 
$(a,b) \in \N\times\N$ to be checked with $a^3 + b^3 \le T$ and $a\le b$.
Such counting results can easily be obtained from Algorithm \ref{algo:BimonotoneMinima} 
tracing the contour of the set $X = \{ f \le z \}$: as a by-product, 
the initialization can be used to determine the sizes 
$n = \sharp \{ f \le z \}$ and $m = \sharp \Min\{ f > z \}$.

\textit{Memory requirements} are, fortunately, no problem.
In the worst case we would have to check all $n$ parameters, which would 
finally build up a priority queue of size $m = 5\,963\,352 < 6 \cdot 10^6$.
Notice that each entry requires $32$ bytes: $12$ bytes for the value $f(a,b)$,
$4$ bytes for $a$ and $4$ bytes for $b$, plus $4$ bytes for each of the three pointers.
In the worst case the priority queue thus requires $180$ megabytes of memory,
which fits nicely in a PC with $256$ megabytes RAM. 
Such memory requirements seem acceptable; on today's PCs 
such a task can reasonably be run in the background.

\textit{Time requirements}, however, are on the edge of being feasible.
Updating a priority queue of $2\cdot 10^6$ entries, say, takes about $4000$ CPU cycles.
On a PC running at 2GHz, we can expect to process about $500\,000$ 
steps per second, that is around $4 \cdot 10^{10}$ steps per day.
This is not too far away from $4 \cdot 10^{14}$, but on a single computer
the search would still require about $10\,000$ days, roughly $25$ years.
On $25$ computers, however, we would be done within a year, possibly earlier.


\textit{Partial results.} Up to June 2005, I have run 
the search on a few available PC's at the Institut Fourier, 
but the use of parallelization has still been rather limited (to a dozen PC's).
As a result I obtained the lower bound $\taxicab{6} > 5 \cdot 10^{20}$
by sorted enumeration of the $2.8 \cdot 10^{13}$ smallest values
of $f(a,b) = a^3 + b^3$. (At a speed of $500\,000$ values per second
this takes about $650$ days on a single computer.)
This leaves us with the inequality
\[
5 \cdot 10^{20} < \taxicab{6} \le T \approx 2.42 \cdot 10^{22}
\]
It will now be a matter of sufficient hardware and patience to find the exact answer.


\bibliographystyle{amsplain}
\bibliography{bimonotone}

\providecommand{\bysame}{\leavevmode\hbox to3em{\hrulefill}\thinspace}
\providecommand{\MR}{\relax\ifhmode\unskip\space\fi MR }
\providecommand{\MRhref}[2]{%
  \href{http://www.ams.org/mathscinet-getitem?mr=#1}{#2}
}
\providecommand{\href}[2]{#2}
\begin{thebibliography}{10}

\bibitem{Bernstein:2001}
D.~J. Bernstein, \emph{Enumerating solutions to {$p(a)+q(b)=r(c)+s(d)$}}, Math.
  Comp. \textbf{70} (2001), no.~233, 389--394. \MR{2001f:11203}

\bibitem{Ekl:1996}
R.~L. Ekl, \emph{Equal sums of four seventh powers}, Math. Comp. \textbf{65}
  (1996), no.~216, 1755--1756. \MR{97a:11050}

\bibitem{GrahamKnuthPatashnik:1989}
R.~L. Graham, D.~E. Knuth, and O.~Patashnik, \emph{Concrete mathematics},
  Addison-Wesley Publishing Co., Reading, Massachusetts, 1989. \MR{MR1001562
  (91f:00001)}

\bibitem{Guy:2004}
R.~K. Guy, \emph{Unsolved problems in number theory}, third ed., Problem Books
  in Mathematics, Springer-Verlag, New York, 2004. \MR{MR2076335}

\bibitem{HardyWright:1954}
G.~H. Hardy and E.~M. Wright, \emph{An introduction to the theory of numbers},
  third ed., The Clarendon Press, Oxford University Press, New York, 1954.
  \MR{MR0067125 (16,673c)}

\bibitem{Knuth:vol1}
D.~E. Knuth, \emph{The art of computer programming, volume 1: fundamental
  algorithms}, second ed., Addison-Wesley Publishing Co., Reading,
  Massachusetts, 1969.

\bibitem{Knuth:vol3}
\bysame, \emph{The art of computer programming, volume 3: sorting and
  searching}, second ed., Addison-Wesley Publishing Co., Reading,
  Massachusetts, 1998.

\bibitem{Leech:1957}
J.~Leech, \emph{Some solutions of {D}iophantine equations}, Proc. Cambridge
  Philos. Soc. \textbf{53} (1957), 778--780. \MR{MR0090602 (19,837f)}

\bibitem{Rosenstiel:1991}
E.~Rosenstiel, J.~A. Dardis, and C.~R. Rosenstiel, \emph{The four least
  solutions in distinct positive integers of the {D}iophantine equation
  {$s=x^3+y^3=z^3+w^3=u^3+v^3=m^3+n^3$}}, Bull. Inst. Math. Appl. \textbf{27}
  (1991), no.~7, 155--157. \MR{MR1125858 (92i:11134)}

\bibitem{Wilson:1999}
D.~W. Wilson, \emph{The fifth taxicab number is
  {$48\,988\,659\,276\,962\,496$}}, J. Integer Seq. \textbf{2} (1999), Article
  99.1.9, HTML document (electronic). \MR{MR1722364 (2000i:11195)}

\end{thebibliography}

\end{document}